\newtheorem{prop}{Proposition}
\newtheorem{lem}{Lemma}
\newtheorem{as}{Assumption}
\newcommand{\revision}[1]{{\color{black} #1}} 
\begin{document}

\title{Low-Rank Factorization for Rank Minimization with Nonconvex Regularizers
\thanks{This work was supported in part by National Science Foundation under Grant Number DMS-1736326.}
} 


\author{April Sagan         \and
        John E. Mitchell 
}


\institute{A. Sagan \at
              \email{aprilsagan1729@gmail.com}           
           \and
           J. Mitchell \at
            \email{mitchj@rpi.edu}           
}

\date{Received: date / Accepted: date}
\newcounter{verbose}
\setcounter{verbose}{0}

\maketitle

\begin{abstract}
Rank minimization is of interest in machine learning applications such as recommender systems and robust principal component analysis.  Minimizing the convex relaxation to the rank minimization problem, the nuclear norm, is an effective technique to solve the problem with strong performance guarantees.  However, nonconvex relaxations have less estimation bias than the nuclear norm and can more accurately reduce the effect of noise on the measurements.

We develop efficient algorithms based on iteratively reweighted nuclear norm schemes, while also utilizing the low rank factorization for semidefinite programs put forth by Burer and Monteiro.  We prove convergence and computationally show the advantages over convex relaxations and alternating minimization methods.  Additionally, the computational complexity of each iteration of our algorithm is on par with other state of the art algorithms, allowing us to quickly find solutions to the rank minimization problem for large matrices.  

\keywords{Rank Minimization \and Matrix Completion \and Nonconvex Regularizers \and Semidefinite Programming }
\end{abstract}

\section{Introduction}

We consider the rank minimization problem with linear constraints formulated as 
		\begin{equation*}
		\label{eqn:rankMin}
	\begin{aligned}
	&\underset{X \in \mathcal{S}^n}{\text{min}}
	& \text{rank}(X) + \phi(X)\\
	& \text{subject to}
	& \mathcal{A}(X)=b\\
	&& X \succeq 0\\
	\end{aligned}
	\end{equation*}
where $\mathcal{S}^n$ denotes the set of symmetric $n \times n$ matrices, $\mathcal{A}: \mathcal{S}^n \rightarrow \mathbb{R}^m$ is an linear map, $b\in \mathbb{R}^m$ is the measurement vector, and $\phi(X)$ is an $L$- smooth function.  A common example is matrix completion, in which the linear constraint is $ P_\Omega(M)=P_\Omega(X)$, where $\Omega$ is the set of indices $(i,j)$ of known points in the matrix, and $P_\Omega: \mathbb{R}^{m \times n} \rightarrow \mathbb{R}^{m \times n}$ is the projection onto the set of matrices which the entry $(i,j)$ vanishes for all $(i,j) \notin \Omega$.  Formally, we define $P_\Omega$ as
\[ P_\Omega(X)_{ij}=\begin{cases} 
       0& (i,j) \notin \Omega\\
      X_{ij} & (i,j) \in \Omega
   \end{cases}
\]
Additionally, in the presence of noise, we can penalize the constraint by adding $ \phi(X)= \frac{\beta}{2} ||P_\Omega(X-M)||_F^2$ to the objective function, with a parameter $ \beta$.
Solving the rank minimization problem directly is impractical due to the rank function being non-convex and highly discontinuous.  In practice, it is common to instead minimize the convex relaxation to the rank function known as the nuclear norm, which is defined as the sum of the singular values of the matrix, or in the case of positive semidefinite matrices, the trace.  
		\begin{equation*}
		\label{eqn:nnm}
	\begin{aligned}
	&\underset{X}{\text{min}}
	& \text{trace}(X)\\
	& \text{subject to}
	& \mathcal{A}(X)=b\\
	&& X \succeq 0\\
	\end{aligned}
	\end{equation*}

The nuclear norm, denoted by $||X||_*=\sum_{i=1}^n \sigma_i(X)$ where $\sigma_i(X)$ is the $i^{th}$ singular value of $X$, is the tightest convex relaxation, and in the case of matrix completion on an $n$ by $n$ matrix known to be at most rank $r$, it has been shown to exactly recover the original matrix with high probability if at least $C n r \log(n)$ entries are observed, for an absolute constant $C$, under the assumption that the original matrix satisfies the incoherence property \cite{Candes:2010:PCR:1823677.1823678}.
{
However, minimizing the nuclear norm is not always the best approach.  As observed in the similar problem of $l_0$ norm minimization, the convex relaxation, the $l_1$ norm, introduces an estimation bias \cite{zhang2010nearly}.  Consider the following rank minimization problem:
$$ \underset{X \in \mathbb{R}^{m \times n}}{\text{min}} ||X||_*+ \frac{\beta}{2} ||P_\Omega(\tilde{M}-X)||_F^2 $$
where $\tilde{M}$ is a low rank matrix, $M$, plus Gaussian noise.  As we show in Section 2, the minimizer to the expected value of the nuclear norm regularized formulation is $\frac{p\beta}{p \beta + 1} M$, where $p= \frac{|\Omega|}{mn}$.  The bias of this formulation comes from the nuclear norm not only minimizing the smallest singular values, which correspond to the noise, but also the largest singular values, which correspond to the signal.

Another common approach to fitting a low rank matrix to a set of measurements is rank constrained optimization, wherein one attempts to find a rank $r$ matrix that minimizes an objective function. 
$$ \underset{X \in \mathbb{R}^{m \times n}}{\text{min}}  ||\mathcal{A}(X)-b||^2  \text{ subject to } \text{rank}(X)=r$$
The most common approach utilizes the low rank factorization $X=UV^T$ for $U \in \mathbb{R}^{m \times r}$ and $V \in \mathbb{R}^{n \times r}$
$$ \underset{U \in \mathbb{R}^{m \times r},V \in \mathbb{R}^{n \times r}}{\text{min}} ||\mathcal{A}(UV^T)-b||^2 $$
Because $r$ is typically much smaller than the size of the matrix, this greatly reduces the number of variables.  

In addition to finding a matrix of a given rank, this technique can be used in nuclear norm minimization as well \cite{srebro1}\cite{srebr02}\cite{JMLR:v16:hastie15a}.  The nuclear norm can be characterized as follows:
\begin{equation*}
\begin{matrix}
                  ||X||_*= &\underset{U ,V}{\text{min}}& \frac{1}{2}\big( ||U||_F^2+||V||_F^2\big) \\
     & \text{subject to} & X=UV^T
\end{matrix}
\end{equation*} 
and so, to minimize a weighted sum of the nuclear norm and a quadratic loss function, we can minimize the following
$$ \underset{U \in \mathbb{R}^{m \times r},V \in \mathbb{R}^{n \times r}}{\text{min}} \frac{1}{2}\big(||U||_F^2 +||V||_F^2 \big) + \frac{\beta}{2} ||\mathcal{A}(UV^T)-b||^2 $$

}
\subsection{Contributions}
In this paper, we consider the following general relaxation to the rank minimization
\begin{equation}
\begin{aligned}
& \underset{X}{\text{min}}
& \sum_{i=1}^n \rho(\lambda_i(X)) + \phi(X)\\
& \text{subject to}
& \mathcal{A}(X)=b\\
&& X \succeq 0\\
\label{eqn:generalizedRelaxation}
\end{aligned}
\end{equation}
where $\lambda_i(X)$ denotes the $i^{th}$ eigenvalue of $X$.   We impose the following assumptions on all $\rho$ throughout the paper.  
\begin{as} For a function $\rho: [0, \infty) \rightarrow [0, \infty)$,
\begin{enumerate}
\renewcommand{\labelenumi}{(\roman{enumi})}
\item $\rho$ is concave
\item $\rho$ is monotonically increasing
\item $\rho(0)=0$
\item For all $x \in [0,\infty)$, every subgradient of $\rho$ is finite.  Because $\rho$ is concave, it is sufficient to say
\[ \lim_{x \rightarrow 0^+} \sup_{w \in \partial \rho(x)}  w= \kappa< +\infty\]
\end{enumerate}
\label{As:concave}
\end{as}
Additionally, we may also impose one or both of the following two assumptions:
\begin{as}
The function $\rho(x)$ is strictly concave on $[0,\infty)$.
\label{As:strict}
\end{as}

\begin{as}
The function $\rho(x)$ is differentiable on $[0,\infty)$.
\label{As:differentiable}
\end{as}

\begin{table} 
--\caption{Examples of typical concave relaxations used in sparse optimization and their supergradients.  For each regularizer, $\gamma$ is a positive parameter.  For SCAD, we take $\beta>1$, and for the Schatten-$p$ norm, $0 <p\leq \revision{2}$.  Each of these functions satisfies Assumption \ref{As:concave}. }
\label{table:functions}
\centering
\begin{tabular}{c|c|c}
 &  $\rho(x)$& $ \partial \rho(x)$\\\hline
Trace Inverse\cite{Gemen} &$1-\frac{\gamma}{\gamma+x}$ &$\frac{\gamma}{(\gamma+x)^2}$ \\\hline
 Capped $l_1$ norm \cite{cappedl1}& $\text{min}({\gamma} x, 1)$ &$\begin{cases} \gamma,& x<\frac{1}{\gamma}\\ [0,\gamma] & x=\frac{1}{\gamma} \\0,& x\geq \frac{1}{\gamma}\\ \end{cases}$\\ \hline
 LogDet \cite{fazel_hindiNone} \cite{mohan_fazel2010}& $\log(x+\gamma) $ &  $\frac{\gamma}{\gamma+x}$\\\hline
 Schatten-p Norm \cite{lai_xu2013}&  $(x+\gamma)^{\frac{p}{2}}$ & $\frac{p}{2\lambda} (x+\gamma)^{\frac{p}{2}-1}$ \\ \hline
  SCAD\cite{SCAD}&
  $\begin{cases}
   \gamma x & x \leq \gamma \\
\frac{-x^2+2\gamma \alpha x-\gamma^2}{2(\alpha-1)} & \gamma \leq x \leq \alpha \gamma\\
\frac{\gamma^2(\alpha+1)}{2} & x > \alpha \gamma
\end{cases}$& 
$\begin{cases}
\gamma & x \leq \gamma\\
\frac{\alpha \gamma - x}{(\alpha-1)} & \gamma \leq x \leq \beta \gamma\\
0 & x > \alpha \gamma
\end{cases}$
\\ \hline
Laplace\cite{laplaceReg} & $1-e^{-\gamma x}$ & $\gamma e^{-\gamma x}$\\
\end{tabular}
\end{table}
{
Examples of functions meeting these assumptions that are commonly used as surrogates to the $l_0$ norm are shown in Table \ref{table:functions}. For each of the functions listed with the exception of the Shatten-$p$ norm and the LogDet relaxation, the derivative approaches 0 for large values of $x$, which would expect to greatly reduce the estimation bias.

\revision{To simplify notation, when applied to a positive semidefinite matrix, the function $\rho:\mathcal{S}_+^n\rightarrow\mathbb{R}_+$ is the sum of the regularizer $\rho$ applied to the eigenvalues of the matrix.  That is,
$$\rho(X)=\sum_i^n \rho(\lambda_i(X))$$
}

In this paper, \revision{we} show by construction that for any regularizer meeting Assumption \ref{As:concave}, the optimization problem \eqref{eqn:generalizedRelaxation} can be posed as a bi-convex optimization problem.  Our bi-convex formulation serves as an abstraction of that presented by Mohan and Fazel \cite{mohan_fazel2010}, and can be used to derive similar iterative reweighted problems.  Using our abstraction, we are able to utilize the low-rank factorization method for solving SDPs proposed by Burer and Monteiro \cite{burer_monteiro2003} in order to reduce the number of variables to $O(nr)$ where $r$ is an upper bound on the rank of the matrix, and extend the results to rectangular matrices as well. We derive algorithms based \revision{on} the low rank factorization and prove convergence.
}

\subsection{Previous Works on Nonconvex Approaches to Rank Minimization}
	In order to more closely approximate the rank of a matrix, Fazel et.\ al.\  proposed the LogDet heuristic for positive semidefinite rank minimization \cite{fazel_hindiNone}.  Instead of a convex function, the authors use the following smooth, concave function as a surrogate for the rank function.
$$\text{log}(\text{det}(X+\gamma I))= \sum_{i=1}^n \text{log}(\lambda_i(X)+\gamma)$$ 	 
where $\gamma$ is a positive parameter.
	While nonconvex, the authors put forwards a Majorize-Minimization (MM) algorithm to find a local optimum.  At each iteration, the first order Taylor expansion centered at the previous iterate is solved as a surrogate function.  The algorithm is simplified to solving the following SDP at each iteration.
		\begin{equation*}
	\begin{aligned}
	X^{(k+1)}=&\underset{X}{\text{argmin}}
	& \langle W^{(k)}, X\rangle\\
	& \text{subject to}
	& \mathcal{A}(X)=b\\
	&& X \succeq 0\\
	\end{aligned}
	\end{equation*}
where $W^{(k)} = (X^{(k-1)}+\delta I)^{-1}$.  We can view this algorithm as an iterative reweighting of the nuclear norm.  The iterative reweighted scheme was later generalized by Mohan and Fazel \cite{mohan_fazel2010} to minimize a class of surrogate functions known as the smooth Schatten-p function, defined as
 \[f_q(X)=\text{Tr}(X+\gamma I)^
\frac{p}{2}= \sum_{i=1}^n (\lambda_i(X)+\gamma)^\frac{p}{2}\]
for $0 < p \leq 2$.  The weight matrix for the Schatten-p function is  $W^{(k)}= (X^{(k-1)}+\gamma I)^{\frac{p}{2}-1}$.  Mohan and Fazel extend the algorithm for non square matrices by solving 
		\begin{equation}
		\label{eqn:MMUpdates}
	\begin{aligned}
	X^{(k+1)}=&\underset{X}{\text{argmin}}
	& \langle W^{(k)}, X^TX\rangle\\
	& \text{subject to}
	& \mathcal{A}(X)=b\\
	\end{aligned}
	\end{equation}
where $W^{(k)} = ({X^{(k-1)}}^TX^{(k-1)}+\gamma I)^{-1}$ at each iteration.   The authors prove asymptotic convergence of the iterative reweighted algorithm for $0 \leq p \leq 1$.  While this algorithm does give superior computational results, it can be very time consuming in the positive semidefinite case and will not scale well for large problems.  We show in Section 5 how this can be improved by taking advantage of the low rank property of $X$.

In recent years, many functions have been proposed as alternative non-convex surrogates to the rank function in addition to the logdet heuristic. Zhang et.\ al.\cite{Zhang_truncated_nuclear_norm} proposed minimizing the truncated nuclear norm for a general matrix $X \in \mathbb{R}^{m\times n}$, defined for a fixed constant $r$ as 

$$||X||_{r, *}= \sum_{i=r+1}^{\text{min}(m,n)} \sigma_i(X)$$
where  $\sigma_i(X)$ denotes the $i^{th}$ largest singular value.  If we consider the large singular values to represent the signal and the small singular values the noise, as in the case of noisy image reconstruction, then this minimizes only the noise.  


	The idea of minimizing a concave function of the eigenvalues has been generalized by Lu et.\ al.\ \cite{Lu2014} \cite{Lu2018}, to any monotonically increasing and Lipschitz differentiable function.  These works consider an unconstrained problem with a general loss function $\phi(X)$.  
$$ \text{min}_X \sum_{i=1}^{min(m,n)} \rho (\sigma_i(X)) +\phi(X) $$
As with the LogDet algorithm, one can derive an MM algorithm using the first order Taylor expansion about the objective function.  The authors include a proximal term.  At each iteration, the authors propose solving the following problem 

\begin{equation*}
\begin{aligned}
X^{k+1}=&\text{min} \sum_{i=1}^{\text{min}(m,n)} w_i \sigma_i(X)+\langle \nabla \phi(X^k), X-X^k\rangle +\frac{\mu}{2} ||X-X^k|| \\
=& \text{min} \sum_{i=1}^{\text{min}(m,n)} w_i \sigma_i(X)+\frac{\mu}{2}||X-Y|| 
\end{aligned}
\end{equation*}
where $Y=X^k-\nabla \phi(X^k)$ and $w_i= \rho_\gamma'(\sigma_i(X^k))$.  Much like the popular Singular Value Thresholding method put forth by Cai, Candès, and Shen \cite{cai_candes2010},  this has a closed form involving the shrinkage operator defined as $\mathcal{S}_{t}(\Sigma)=\text{Diag}(\Sigma_{ii} -t_i)_+$.  The authors prove that the subproblem has a closed form solution 
 
$$X^{k+1}=U\mathcal{S}_{\gamma w}(\Sigma) V^T$$ 
where $U\Sigma V^T$ is the singular value decomposition of $Y$.  

\revision{The shrinkage operator, however, requires computing the singular value decompositon of a possibly very large matrix, which can be time consuming and inefficent even when only the top few singular values are needed.  Similar algorithms presented by Yao et.\ al.\ address this problem by showing one only needs to find the singular value decomposition of a much smaller matrix, making the method suitable for large scale problems. \cite{yao_kwok2017}, \cite{yao_faster}}
 \\ \\

\section{Equivalent Biconvex Formulation }

It was shown by Mohan and Fazel \cite{mohan_fazel2010} that the LogDet heuristic can be reformulated as a bi-convex problem with an additional variable $W$ as follows 
\begin{equation}
\begin{aligned}
\label{eqn:linearizedinX_logDet}
& \underset{X,W}{\text{min}}
& \langle X, W \rangle +\gamma \text{trace}(W)-\log \text{det}(W) \\
& \text{subject to}
& \mathcal{A}(X)=b\\
&& X\succeq 0\\
&&  I \succeq W \succeq 0
\end{aligned}
\end{equation}
This allowed the authors to reformulate the MM algorithm outlined in equation (\ref{eqn:MMUpdates}) as an alternating method, which was of use when showing convergence of the algorithm.  We now show that an extension of this reformulation can be used for any surrogate to the rank function satisfying Assumption \ref{As:concave}.  

\begin{prop}
\label{prop:linearizeInX}
For a function $\revision{\rho}$ satisfying Assumption \ref{As:concave}, consider the following bi-convex semidefinite program
\begin{equation}
\begin{aligned}
\label{eqn:linearizedinX}
& \underset{X,W}{\text{min}}
& \langle X, W \rangle +G(W) + \phi(X) \\
& \text{subject to}
& \mathcal{A}(X)=b\\
&& X\succeq 0\\
&& \kappa I \succeq W \succeq 0
\end{aligned}
\end{equation}
where $\kappa = \sup \partial \rho (0)$, the function $G: \mathbb{S}_+^n \rightarrow \mathbb{R}$ defined as $G(W)=\sum g(\lambda_i(W))$ satisfies the following condition:
\begin{equation}
\label{eqn:derivativeInverse}
\partial g(w) \, = \, \{ -x \, : \, w \in \partial {\rho} (x) \}.
\end{equation}
Any KKT point $X^*$ of the general nonconvex relaxation (\ref{eqn:generalizedRelaxation}) can be used to construct a KKT point $(X^*, W^*)$ of (\ref{eqn:linearizedinX}) where $W^*\in \partial \revision{\rho(X^*)}$ .  Likewise, for any $(X^*, W^*)$ pair that is a KKT point of (\ref{eqn:linearizedinX}), $X^*$ is a KKT point of (\ref{eqn:generalizedRelaxation}) and  $W^*\in \partial \revision{\rho(X^*)}$.
\end{prop}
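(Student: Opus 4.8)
The plan is to realize $g$ explicitly as the negation of the concave conjugate of $\rho$, turn the supergradient relation \eqref{eqn:derivativeInverse} into a Fenchel-type variational identity, lift that identity from the real line to the positive semidefinite cone, and then note that the KKT system of the product-structured problem \eqref{eqn:linearizedinX} separates into an $X$-block that is literally the KKT system of \eqref{eqn:generalizedRelaxation} and a $W$-block that encodes exactly the membership $W^{*}\in\partial\rho(X^{*})$.

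\emph{The scalar building block.} I would take $g(w)=\sup_{x\ge 0}\{\rho(x)-wx\}$. Being a supremum of affine functions, $g$ is convex and lower semicontinuous, and Assumption~\ref{As:concave} (concavity, monotonicity, $\rho(0)=0$, and finiteness of supergradients with $\sup\partial\rho(0)=\kappa$) gives: $g$ is finite on $[0,\kappa]$; $\partial g(w)=\{-x:w\in\partial\rho(x)\}$, so this $g$ satisfies \eqref{eqn:derivativeInverse} and any admissible $g$ differs from it only by an additive constant, which is irrelevant to \eqref{eqn:linearizedinX}; and the concave Fenchel--Moreau identity holds as
\[
\rho(x)=\min_{0\le w\le\kappa}\{wx+g(w)\},\qquad x\ge 0,
\]
with the set of minimizers equal to $\partial\rho(x)$ and with $wx+g(w)=\rho(x)$ precisely when $w\in\partial\rho(x)$. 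The restriction $w\le\kappa$ loses nothing, since $\partial\rho(x)\subseteq[0,\kappa]$ by monotonicity and concavity.

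\emph{Lifting to matrices.} Fix $X\succeq 0$ with eigenvalues $\mu_1\ge\cdots\ge\mu_n$ and eigenvectors $p_1,\dots,p_n$. Von Neumann's trace inequality bounds $\langle X,W\rangle$ below by pairing the $i$-th largest eigenvalue of $X$ with the $i$-th smallest eigenvalue of $W$; adding the separable term $G(W)=\sum_i g(\lambda_i(W))$ and invoking the scalar step coordinatewise gives $\langle X,W\rangle+G(W)\ge\sum_i\rho(\mu_i)=\rho(X)$ for every $W$, with equality attained by any $W^{*}$ that commutes with $X$ and carries an eigenvalue from $\partial\rho(\mu_i)$ on each $\mu_i$-eigenspace — the ordering constraint coming from von Neumann being satisfiable precisely because the superdifferential of a concave function is monotone. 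In particular the minimizer set of $W\mapsto\langle X,W\rangle+G(W)$, over $\mathcal S^{n}$ or over the box $0\preceq W\preceq\kappa I$ alike, is exactly $\partial\rho(X)$ in the standard spectral-function sense, so that
\[
-X\in\partial G(W)\iff W\in\partial\rho(X)\qquad\text{for }X\succeq 0,\ 0\preceq W\preceq\kappa I.
\]
This equivalence is the technical heart of the proof.

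\emph{Matching the optimality systems, and the obstacle.} Because the feasible set of \eqref{eqn:linearizedinX} is a product, its KKT conditions split into an $X$-block, $W+\nabla\phi(X)=\mathcal A^{*}y+S$ with $\mathcal A(X)=b$, $X\succeq 0$, $S\succeq 0$, $\langle S,X\rangle=0$, and a $W$-block, $-X\in\partial G(W)+N_{\{0\preceq W\preceq\kappa I\}}(W)$. Given a KKT point $X^{*}$ of \eqref{eqn:generalizedRelaxation} with witnessing supergradient $W^{*}\in\partial\rho(X^{*})$ and multipliers $(y,S)$, the scalar step gives $0\preceq W^{*}\preceq\kappa I$ and $-X^{*}\in\partial G(W^{*})$, so the $W$-block holds with vanishing normal-cone term while the $X$-block holds by hypothesis; hence $(X^{*},W^{*})$ is a KKT point of \eqref{eqn:linearizedinX}. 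Conversely, a KKT point $(X^{*},W^{*})$ of \eqref{eqn:linearizedinX} satisfies the $W$-block, which is exactly the optimality condition of the \emph{convex} problem $\min_{0\preceq W\preceq\kappa I}\{\langle X^{*},W\rangle+G(W)\}$; by the matrix step its minimizers are precisely $\partial\rho(X^{*})$, so $W^{*}\in\partial\rho(X^{*})$, and then the $X$-block is exactly the KKT system of \eqref{eqn:generalizedRelaxation} (recall $X\mapsto\sum_i\rho(\lambda_i(X))$ is concave on $\mathcal S^{n}$, so its stationarity is expressed through the superdifferential). I expect the one genuinely non-routine point to be the matrix step — proving $-X\in\partial G(W)\iff W\in\partial\rho(X)$ rigorously, i.e.\ that the $W$-minimization decouples across eigenvalues via von Neumann's inequality and the monotone superdifferential, with the usual care needed when $X$ has repeated eigenvalues; the scalar conjugacy and the KKT bookkeeping are then routine.
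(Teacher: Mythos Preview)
Your proposal is correct and reaches the same conclusion as the paper, but by a genuinely different route. The paper constructs $g$ explicitly as an integral $g(w)=\int_w^{\kappa}q(t)\,dt$ of a generalized inverse of $\partial\rho$, proves convexity and the subdifferential identity by hand (Lemmas~\ref{lemma:convex} and~\ref{lemma:inverse}), then uses the Hoffman--Wielandt inequality (Lemma~\ref{lem:simDiag}) to force simultaneous diagonalizability of $X$ and $W$ at a KKT point, and finally does an explicit case analysis on whether $\lambda_i(W)$ equals $0$, lies in $(0,\kappa)$, or equals $\kappa$. You instead recognize $g$ as the concave conjugate $g(w)=\sup_{x\ge 0}\{\rho(x)-wx\}$, obtain \eqref{eqn:derivativeInverse} directly from Fenchel duality, and lift the scalar identity $\rho(x)=\min_{0\le w\le\kappa}\{wx+g(w)\}$ to matrices via von Neumann's trace inequality, packaging the whole $W$-block into the single equivalence $-X\in\partial G(W)\iff W\in\partial\rho(X)$. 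Your approach is more conceptual and avoids the eigenvalue case split entirely; the paper's approach is more self-contained and yields the explicit integral formula for $g$ that populates Table~\ref{table:functionsG}. The one place both proofs do real work is the same: showing the optimal $W$ commutes with $X$ with oppositely ordered spectra --- you via von Neumann, the paper via Hoffman--Wielandt --- and your acknowledgment that repeated eigenvalues of $X$ need the usual care is exactly the right caveat.
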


\begin{remark}
In previous works, it has been shown that the rank minimization problem \eqref{eqn:rankMin} is equivalent to the following semidefinite program with complementarity constraints:

\begin{equation}
\begin{aligned}
\label{eqn:SDPCC}
& \underset{X,U}{\text{min}}
& n-\text{trace}(U) + \phi(X) \\
& \text{subject to} 
& \langle X, U \rangle=0 \\
&& \mathcal{A}(X)=b\\
&& X\succeq 0\\
&& 0 \preceq U \preceq I
\end{aligned}
\end{equation}
Intuitively, the eigenvalues of the matrix $I-U$ are the $l_0$ norm of the eigenvalues of $X$, which implies that $n-\text{trace}(U)$ is the rank of $X$ \revision{\cite{shen_mitchell2018,sagan2020,li_qi_correlation_matrix}}.  Shen and Mitchell \cite{shen_mitchell2018} studied the problem when the complementarity constraint is relaxed as a penalty term.

\begin{equation}
\begin{aligned}
\label{eqn:SDPCC_penalty}
& \underset{X,U}{\text{min}}
&n-\text{trace}(U) + \gamma \langle X, U \rangle + \phi(X) \\
& \text{subject to} 
& \mathcal{A}(X)=b\\
&& X\succeq 0\\
&& 0 \preceq U \preceq I
\end{aligned}
\end{equation}
The penalty formulation is a biconvex semidefinite program in the form of (\ref{eqn:linearizedinX}), with $W=\frac{1}{\gamma} U$ and $G(W)= -\frac{1}{\gamma} \text{trace}(W)$. This is equivalent to the semidefinite program \eqref{eqn:generalizedRelaxation} with $\rho(x)$ being the capped $l_1$ norm, $ min(\frac{1}{\gamma} x, 1)$
\end{remark}

We want to work with the derivative of the
inverse of the derivative of $\rho(x)$, but this is only defined as stated
if $\rho_\gamma(x)$ satisfies Assumptions \ref{As:strict} and~\ref{As:differentiable}.
Under only Assumption~\ref{As:concave},
we define the function
\begin{equation}
q(t) \, := \,\inf \{ x \in [0,\infty) \, : \, t \in \partial \rho_\gamma (x) \}.
\end{equation}
Note that if $t \geq \kappa$ then $t \in \partial \rho(0)$, so $q(t)=0$ for $t \geq \kappa$.
The function $q(t)$ is defined for $t > \beta$, since $\rho(x)$ is concave;
$q(\beta)$ is also defined if $\beta$ is attained.
We let $J$ denote the domain of~$q(t)$
and $\bar{J}:=\{w \in J \, : \, w \leq \kappa\}$.
Note that $q(t)$ is lower semicontinuous;
it is continuous if Assumptions~\ref{As:strict} and~\ref{As:differentiable} hold,
in which case it is the inverse function of the derivative of $\rho(x)$
for $t \in \bar{J}$.
We can now define the function $g:J \rightarrow [0,\infty)$ as
\begin{equation}
g(w) \, := \, \int_w^{\kappa} q(t) dt.
\end{equation}

\begin{lem}   \label{lemma:convex}
The function $g(w)$ is decreasing and convex on its domain~$J$.
It is strictly convex for $w \leq \kappa$ if Assumption~\ref{As:differentiable} holds.
It is differentiable
for $w \leq \kappa$ if Assumption~\ref{As:strict} holds.
\end{lem}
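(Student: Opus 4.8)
The idea is to work entirely with the integral representation $g(w)=\int_w^{\kappa}q(t)\,dt$ and to push the structural properties of the concave function $\rho$ through the ``inverse derivative'' $q$. First I would record the elementary properties of $q$. Since $\rho$ is monotonically increasing, every element of every $\partial\rho(x)$ is nonnegative, and the infimum defining $q(t)$ is taken over a subset of $[0,\infty)$; hence $q(t)\ge 0$ on $J$. Since $\partial\rho$ is a monotone (decreasing) correspondence --- if $t_1\in\partial\rho(x_1)$ and $t_2\in\partial\rho(x_2)$ then $(t_2-t_1)(x_1-x_2)\ge 0$ --- and has closed graph (so the infimum defining $q(t)$ is attained, i.e.\ $t\in\partial\rho(q(t))$ for $t\in J$), I would deduce that $q$ is nonincreasing on $J$ and that $J$ is an interval. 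A monotone function is locally Riemann integrable, so $g$ is well defined, and for $w_1<w_2$ in $J$ we have $g(w_1)-g(w_2)=\int_{w_1}^{w_2}q(t)\,dt\ge 0$; thus $g$ is decreasing.

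For convexity I would use an elementary Jensen-type estimate. Fix $w_1<w_2$ in $J$, $\theta\in(0,1)$, and set $w_\theta=\theta w_1+(1-\theta)w_2$. Splitting the integral at $w_\theta$ gives $g(w_1)=g(w_\theta)+\int_{w_1}^{w_\theta}q$ and $g(w_2)=g(w_\theta)-\int_{w_\theta}^{w_2}q$, so the convexity inequality reduces to $\theta\int_{w_1}^{w_\theta}q\ge(1-\theta)\int_{w_\theta}^{w_2}q$, which follows by bounding $q$ below by $q(w_\theta)$ on $[w_1,w_\theta]$, above by $q(w_\theta)$ on $[w_\theta,w_2]$, and using $w_\theta-w_1=(1-\theta)(w_2-w_1)$ and $w_2-w_\theta=\theta(w_2-w_1)$. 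Under Assumption~\ref{As:differentiable} the same computation yields strict convexity on $\bar J$ once $q$ is known to be \emph{strictly} decreasing there: differentiability of the concave $\rho$ makes $\rho'$ single valued and, by Darboux's theorem, continuous, so on $\bar J$ the map $q$ is a right inverse of $\rho'$ (that is, $\rho'(q(t))=t$), hence injective, hence --- being also nonincreasing --- strictly decreasing; then $q(t)>q(w_\theta)$ on the positive-length interval $(w_1,w_\theta)$ makes the estimate strict.

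For differentiability under Assumption~\ref{As:strict}, note that the right and left derivatives of $g$ at $w$ equal $-\lim_{t\to w^{+}}q(t)$ and $-\lim_{t\to w^{-}}q(t)$ respectively, so $g$ is differentiable at $w$ exactly when $q$ is continuous at $w$; it therefore suffices to prove $q$ is continuous on $\bar J$. Strict concavity forces the sets $\partial\rho(x)$, $x\ge 0$, to be pairwise disjoint (two distinct points sharing a subgradient would make $\rho$ affine between them), so for every $x\ge 0$ and every $t\in\partial\rho(x)$ one has $q(t)=x$. A monotone function can only have jump discontinuities; if $q$ jumped at some $t_0\in\bar J$ it would omit a nondegenerate interval of values $(a,b)\subset[0,\infty)$ apart from the single value $q(t_0)$, but choosing $x\in(a,b)$ with $x\ne q(t_0)$ and any $t\in\partial\rho(x)\neq\emptyset$ gives $q(t)=x$, a contradiction. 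Hence $q$ is continuous on $\bar J$ --- in particular left-continuous at $\kappa$, where $q(\kappa)=0$ because strict concavity rules out $\rho$ being affine near $0$ --- so $g$ is differentiable there with $g'(w)=-q(w)$.

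I expect the last step to be the main obstacle: deriving continuity of $q$ on $\bar J$ from strict concavity alone (without differentiability), and, more generally, handling the attainment of the infimum defining $q$, the behaviour at the endpoint $w=\kappa$ (where $q\equiv 0$ and $g$ is constant for $w\ge\kappa$, which is exactly why strict convexity and differentiability are claimed only for $w\le\kappa$), and the precise description of the domain $J$. The monotonicity and convexity parts are routine once $q$ is shown to be nonnegative and nonincreasing.
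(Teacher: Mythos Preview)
Your proposal is correct and follows essentially the same route as the paper: monotonicity of $g$ from $q\ge 0$, convexity from the monotonicity of $q$ via the same integral-splitting estimate at the convex combination, strict convexity from $q$ being strictly decreasing when $\rho$ is differentiable, and differentiability of $g$ from continuity of $q$ when $\rho$ is strictly concave. The only difference is that you supply more detailed justifications for the needed properties of $q$ (injectivity via $\rho'(q(t))=t$, the jump--omitted-value contradiction for continuity), whereas the paper asserts these properties more tersely.
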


\begin{lem}  \label{lemma:inverse}
For each $x \in [0,\infty)$, there exists $w \in \partial \rho(x)$ such that
\begin{equation}
-x \, \in \, \partial g (w).
\end{equation}
Further, the subdifferential is given by
\begin{equation}
\partial g(w) \, = \, \{ -x \, : \, w \in \partial \rho (x) \}.
\end{equation}
If $\rho(x)$ also satisfies Assumptions~\ref{As:strict} and~\ref{As:differentiable}
then
\begin{equation}
g' ((\rho')^{-1}(x)) \, = \, -x.
\end{equation}
\end{lem}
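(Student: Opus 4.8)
The plan is to prove the set-valued identity $\partial g(w)=\{-x\,:\,w\in\partial\rho(x)\}$ first, and then read off the other two assertions from it. Granting the identity: for the existence statement, note that $\partial\rho(x)\neq\emptyset$ for every $x\in[0,\infty)$ — at an interior point $x>0$ because $\rho$ is finite and concave, and at $x=0$ because concavity with $\rho(0)=0$ gives $\rho(x)\le\kappa x$, so $\kappa\in\partial\rho(0)$ — hence any $w\in\partial\rho(x)$ puts $x$ in the set $\{x'\,:\,w\in\partial\rho(x')\}$ and therefore $-x\in\partial g(w)$. For the differentiability statement, under Assumptions~\ref{As:strict} and~\ref{As:differentiable} the map $\rho'$ is continuous and strictly decreasing, so $q$ is its genuine inverse on $\bar J$ and $\{x\,:\,w\in\partial\rho(x)\}=\{q(w)\}$ for $w\le\kappa$; thus $\partial g(w)=\{-q(w)\}$, which says $g$ is differentiable there with $g'(w)=-q(w)=-(\rho')^{-1}(w)$, i.e. $g'(\rho'(x))=-x$.

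To get the identity I would compute both sides. On the left, writing $g(w)=\int_w^{\kappa}q(t)\,dt$ with $q$ decreasing (it is the inverse of the decreasing set-valued map $\partial\rho$), the one-sided difference quotients of $g$ are averages of $q$ over shrinking intervals, so $g'_+(w)=-q(w^+)$ and $g'_-(w)=-q(w^-)$ and hence $\partial g(w)=[-q(w^-),-q(w^+)]$; since $q$ is lower semicontinuous and decreasing it is right-continuous, so $q(w^+)=q(w)=\inf\{x\,:\,w\in\partial\rho(x)\}$. On the right, put $S(w):=\{x\in[0,\infty)\,:\,w\in\partial\rho(x)\}$. This set is an interval, because if $w\in\partial\rho(x_1)\cap\partial\rho(x_2)$ with $x_1<x_2$ the two supporting inequalities force $\rho$ to be affine of slope $w$ on $[x_1,x_2]$; it is closed because $\partial\rho$ has closed graph; its infimum is $q(w)$ by the definition of $q$; and its supremum is $q(w^-)$ — the bound $\sup S(w)\le q(w^-)$ follows from monotonicity of $\partial\rho$ (if $w\in\partial\rho(x)$ and $t<w$ then $t\in\partial\rho(x')$ forces $x'\ge x$, so $q(t)\ge x$ and $q(w^-)\ge x$), while $q(w^-)\in S(w)$ follows by taking $t_n\uparrow w$ with $t_n\in\partial\rho(q(t_n))$, $q(t_n)\to q(w^-)$, and using the closed graph of $\partial\rho$. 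Hence $S(w)=[q(w),q(w^-)]$ and $\{-x\,:\,x\in S(w)\}=[-q(w^-),-q(w)]=\partial g(w)$.

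I expect the real work to be the identification $S(w)=[q(w),q(w^-)]$: this amounts to showing that the inverse of the merely lower-semicontinuous, possibly multivalued monotone map $\partial\rho$ is a closed-interval-valued monotone map whose endpoints are the one-sided limits of $q$, which needs the closed-graph and limiting arguments above and some care at $x=0$, where $\kappa=\lim_{x\to0^+}\sup\partial\rho(x)$ plays the role of $\sup\partial\rho(0)$. A cleaner but essentially equivalent route is to recognize $g$ as the negative of the concave conjugate $\rho^{\circ}(w):=\inf_{x\ge0}\{wx-\rho(x)\}$: one checks $\rho^{\circ}(\kappa)=0$ and $(\rho^{\circ})'=q$ almost everywhere, so $\rho^{\circ}(w)=-\int_w^{\kappa}q=-g(w)$, and then the Fenchel--Young relation $x\in\partial\rho^{\circ}(w)\Leftrightarrow w\in\partial\rho(x)$ gives $\partial g(w)=-\partial\rho^{\circ}(w)=\{-x\,:\,w\in\partial\rho(x)\}$ at once.
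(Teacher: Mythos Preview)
Your proposal is correct. Your primary route---computing the one-sided derivatives of $g$ from its integral representation to get $\partial g(w)=[-q(w^{-}),-q(w)]$, and separately identifying $S(w)=\{x:w\in\partial\rho(x)\}$ as the closed interval $[q(w),q(w^{-})]$---is essentially the same strategy the paper uses: both arguments boil down to the monotonicity of $q$ applied to the difference $g(w+h)-g(w)=-\int_w^{w+h}q(t)\,dt$, and both then invoke the closed-graph/monotonicity properties of $\partial\rho$ to match the resulting interval with $S(w)$. You are more explicit about the endpoint analysis (right-continuity of $q$, the limiting argument for $q(w^{-})\in S(w)$), which the paper compresses into a single line ``from concavity of~$\rho(x)$''; this added care is warranted, since the paper's chain of equalities is somewhat telegraphic at exactly that step.

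Your alternative route via the concave conjugate is genuinely different from the paper and buys a cleaner argument: once one checks $g=-\rho^{\circ}$ (which is a one-line computation of $\rho^{\circ}(\kappa)=0$ plus $(\rho^{\circ})'=q$ a.e.), the identity $\partial g(w)=\{-x:w\in\partial\rho(x)\}$ is just the Fenchel--Young equivalence $x\in\partial\rho^{\circ}(w)\Leftrightarrow w\in\partial\rho(x)$, with no interval-endpoint bookkeeping needed. This is arguably the ``right'' proof, since the construction of $g$ is precisely the Legendre transform in disguise.
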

\ifodd\value{verbose}{
\begin{example}
Let $\rho(x)$ be the piecewise linear function
\begin{displaymath}
\rho(x) \, = \, \left\{
\begin{array}{ll}
\kappa x & \mbox{if } 0 \leq x \leq a  \\
\kappa a + \alpha (x - a) & \mbox{if } a \leq x \leq b  \\
\kappa a + \kappa (b-a) + \beta (x-b) & \mbox{if } x \geq b
\end{array}
\right.
\end{displaymath}
where $0 \leq \beta < \alpha < \kappa$ and $0 < a < b$.
Then
\begin{displaymath}
q(t) \, = \, \left\{
\begin{array}{ll}
b & \mbox{if } \beta \leq t < \alpha  \\
a & \mbox{if } \alpha \leq t < \kappa  \\
0 & \mbox{if } t \geq \kappa
\end{array}
\right.
\end{displaymath}
and
\begin{displaymath}
g(t) \, = \, \left\{
\begin{array}{ll}
b (t-\beta) + a (\alpha-\beta) & \mbox{if } \beta \leq t < \alpha  \\
a (t-\alpha) & \mbox{if } \alpha \leq t < \kappa  \\
0 & \mbox{if } t \geq \kappa
\end{array}
\right.
\end{displaymath}
Further,
\begin{displaymath}
\partial g(w) \, = \, \left\{
\begin{array}{ll}
\left(-\infty,-b\right] & \mbox{if } \beta = w \\
\left\{-b\right\} & \mbox{if } \beta < w < \alpha  \\
\left[-b,-a\right] & \mbox{if } \alpha = w  \\
\left\{-a\right\} & \mbox{if } \alpha < w < \kappa  \\
\left [-a,0\right ] & \mbox{if } w = \kappa  \\
\{ 0 \} & \mbox{if } w > \kappa
\end{array}
\right.
\end{displaymath}
\end{example}}\fi

\begin{example}
Let $\rho(x)$ be the continuous nondifferentiable function
\begin{displaymath}
\rho(x) \, = \, \left\{
\begin{array}{ll}
4x & \mbox{if } 0 \leq x \leq 2  \\
6x-x^2 & \mbox{if } 2 \leq x \leq 3  \\
9 & \mbox{if } x \geq 3
\end{array}
\right.
\end{displaymath}
which is nondifferentiable at $x=2$ and is only strictly concave for $x \in [2,3]$.
We have $\beta=0$ and $\kappa=4$.
Then
\begin{displaymath}
q(t) \, = \, \left\{
\begin{array}{ll}
3 - \frac{1}{2}t & \mbox{if } 0 \leq t \leq 2  \\
2 & \mbox{if } 2 \leq t < 4  \\
0 & \mbox{if } t \geq 4
\end{array}
\right.
\end{displaymath}
and
\begin{displaymath}
g(t) \, = \, \left\{
\begin{array}{ll}
9 + \frac{1}{4}t^2 - 3t & \mbox{if } 0 \leq t \leq 2  \\
2(4-t) & \mbox{if } 2 \leq t \leq 4  \\
0 & \mbox{if } t \geq 4
\end{array}
\right.
\end{displaymath}
Further,
\begin{displaymath}
\partial g(w) \, = \, \left\{
\begin{array}{ll}
\left[ -\infty, -3 \right] & \mbox{if } w = 0 \\
\{ \frac{1}{2}w - 3 \} & \mbox{if } 0 < w \leq 2  \\
\{ -2 \}  & \mbox{if } 2 \leq w < 4  \\
\left[ -2, 0 \right] & \mbox{if } w = 4 \\
\{0\} & \mbox{if } w > 4
\end{array}
\right.
\end{displaymath}
The lack of strict concavity on the two line segments leads to the two intervals of subgradients $\partial g(w)$ for
$w=0$ and $w=4$.
The nondifferentiability at $x=2$ leads to multiple values of $w$ having the same set of subgradients $\partial g(w)$,
namely~$\{2\}$ for $2 \leq w < 4$.
\end{example}

\noindent
{\bf Proofs of lemmas}
\begin{proof}
Proof of Lemma~\ref{lemma:convex}:

Monotonicity of $g(w)$ follows from the nonnegativity of~$q(t)$.

To show convexity, we consider $w_1<w_2$, with $w_1,w_2 \in J$,
and $0 \leq \lambda \leq 1$.
We have
\begin{displaymath}
\arraycolsep=1pt\def\arraystretch{1}
\begin{array}{rcl}
g(\lambda w_1+ &(1&-\lambda)w_2)=  \int_{\lambda w_1+ (1-\lambda)w_2}^{\kappa} q(t) dt  \\ [5pt]
&=&  \lambda \int_{w_1}^{\kappa} q(t) dt \, + \, (1-\lambda) \int_{w_2}^{\kappa} q(t) dt\, - \,  \lambda \int_{w_1}^{\lambda w_1 + (1-\lambda)w_2} q(t) dt \\
&& \qquad 
\, + \, (1-\lambda) \int_{\lambda w_1+ (1-\lambda)w_2}^{w_2} q(t) dt  \\ [5pt]
& \leq  &\lambda g(w_1) \, + \, (1-\lambda) g(w_2)  \\
&& \qquad \, - \, \lambda (\lambda w_1+ (1-\lambda)w_2 - w_1) \, g(\lambda w_1+ (1-\lambda)w_2) \\
&& \qquad \, + \, (1-\lambda) (w_2-\lambda w_1+ (1-\lambda)w_2) \, g(\lambda w_1+ (1-\lambda)w_2)  \\
&& \qquad \mbox{from monotonicity of $q(t)$}  \\  [5pt]
& =&  \lambda g(w_1) \, + \, (1-\lambda) g(w_2),
\end{array} 
\end{displaymath}
so $g(w)$ is convex.

If Assumption~\ref{As:differentiable} holds then
$q(t)$ is strictly decreasing for $\beta<w_1 \leq \kappa$,
so the inequality above holds strictly, so $g(w)$ is strictly convex.

If Assumption~\ref{As:strict} holds then $q(t)$ is continuous on~$J$,
so $g(w)$ is differentiable.
\end{proof}

\begin{proof}
Proof of Lemma~\ref{lemma:inverse}:

Since $g(w)$ is convex, the subdifferential of $g(w)$
for a slope~$w \in J$  is defined
as
\begin{displaymath}
\begin{array}{rcl}
\partial g(w)
&=& \{ \xi \, : \, \xi h \, \leq \, g(w+h) \, - \, g(w) \, \, \forall \, w+h \in J \}  \\ [5pt]
&=& \{ \xi \, : \, \xi h \, \leq \, -\int_w^{w+h} q(t) dt \, \, \forall \, w+h \in J \}  \\ [5pt]
&=& \{ \xi \, : \, \xi h \, \leq \, -h q(w+h) \, \forall \, w+h \in J \}  \\
&& \qquad \mbox{from monotonicity of $q(t)$}  \\  [5pt]
&=& \{ \xi \, : \, \xi  \, \leq \, - q(w+h) \, \forall \, h > 0, \, w+h \in J \}  \\
&& \qquad \cap \, \{ \xi \, : \, \xi  \, \geq \, - q(w+h) \, \forall \, h < 0, \, w+h \in J \}  \\ [5pt]
&=& \{ \xi \, : \, \xi  \, \leq \, - x \, \, \forall \, x \in [0,\infty)
\mbox{ with } \, w+h \in J \cap \partial f (x), \, h > 0 \}  \\
&& \qquad \cap \, \{ \xi \, : \, \xi  \, \geq \, - x \, \, \forall \, x \in [0,\infty)
\mbox{ with } \, w+h \in J \cap \partial f (x), \, h < 0 \}  \\ [5pt]
&=& \{ \, - x \, : \,  w \in  \partial f (x) \} \quad \mbox{from concavity of $\rho(x)$.} \\
\end{array}
\end{displaymath}
It follows that given $x \in [0,\infty)$, we can choose $\bar{w} \in \partial f (x)$,
and we will have $-x \in \partial g(\bar{w})$.

If Assumptions~\ref{As:strict} and~\ref{As:differentiable} hold
then $\partial \rho(x) = \{ \rho'(x) \}$,
$x$ is the unique point with derivative $\rho'(x)$,
and $g(w)$ is differentiable from
Lemma~\ref{lemma:convex}.
Setting $\bar{w}=\rho'(x)$,
the Fundamental Theorem of Calculus implies that
\begin{displaymath}
g'(\rho'(x)) \, = \, -q(\rho'(x)) \, = \, -x,
\end{displaymath}
as required.
\end{proof}

Before proving Proposition \ref{prop:linearizeInX}, we consider the following lemma.
\revision{
\begin{lem}
\label{lem:simDiag}
Let $X$ be a positive definite matrix. Let $G({W})=\sum_{i=1}^ng(\lambda_i({W}))$ be a convex function for any matrix ${W} \in \mathbb{S}^n_+$.
Let $\kappa$ be a positive constant.
If $\tilde{W}$ is a minimizer of:
\begin{displaymath}
\begin{array}{ll}
\min_{{W} \in \mathbb{S}^n_+} & \langle X, {W} \rangle \, + \, G({W})  \\
\mbox{subject to} & 0 \preceq {W} \preceq \kappa I
\end{array}
\end{displaymath}
Then $\hat{W}$ is also a minimizer with the same objective value, where $$\hat{W}=\sum_{i=1}^n \lambda_{n-i+1}(\tilde{W}) v_iv_i^T$$ and $v_i$ is the eigenvector of $X$ corresponding to the $i$th largest eigenvalue.
\end{lem}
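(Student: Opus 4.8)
The plan is to reduce the claim to von Neumann's trace inequality for symmetric matrices. First I would record the spectral bookkeeping: by construction $\hat{W}=\sum_{i=1}^n \mu_i v_iv_i^T$, where $\{v_1,\dots,v_n\}$ is an orthonormal eigenbasis of $X$ and $\{\mu_i\}_{i=1}^n=\{\lambda_{n-i+1}(\tilde{W})\}_{i=1}^n$ is merely a reindexing of the spectrum of $\tilde{W}$. Hence $\hat{W}\in\mathbb{S}^n_+$, the feasibility constraint $0\preceq \hat{W}\preceq \kappa I$ is preserved because the eigenvalues of $\hat{W}$ all lie in $[0,\kappa]$, and $G(\hat{W})=\sum_i g(\lambda_i(\hat{W}))=\sum_i g(\lambda_i(\tilde{W}))=G(\tilde{W})$. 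So $\hat{W}$ is feasible, and the only term of the objective that can differ between $\hat{W}$ and $\tilde{W}$ is the linear term $\langle X,\cdot\rangle$.

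Second, I would compute and bound that linear term. Ordering the eigenvalues of $X$ decreasingly, $\lambda_1(X)\geq\cdots\geq\lambda_n(X)$, with $v_i$ a unit eigenvector for $\lambda_i(X)$, the definition of $\hat{W}$ gives directly $\langle X,\hat{W}\rangle=\sum_{i=1}^n \lambda_i(X)\,\lambda_{n-i+1}(\tilde{W})$, i.e.\ the spectra of $X$ and $\tilde{W}$ paired in opposite orders. Von Neumann's trace inequality states that for every $W\in\mathbb{S}^n_+$ one has $\langle X,W\rangle\geq\sum_{i=1}^n \lambda_i(X)\,\lambda_{n-i+1}(W)$; applying it to $W=\tilde{W}$ yields $\langle X,\tilde{W}\rangle\geq\langle X,\hat{W}\rangle$.

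Putting the pieces together, $\langle X,\hat{W}\rangle+G(\hat{W})\leq\langle X,\tilde{W}\rangle+G(\tilde{W})$ with $\hat{W}$ feasible; since $\tilde{W}$ is a minimizer the reverse inequality holds as well, so the two objective values coincide, $\hat{W}$ is also a minimizer, and in particular $\langle X,\hat{W}\rangle=\langle X,\tilde{W}\rangle$.

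The work here is not a genuine obstacle but rather the clean invocation of von Neumann's inequality; for a self-contained treatment one can prove the needed lower bound by induction, peeling off the largest eigenvalue of $X$, or from the fact that for any orthogonal $U$ the diagonal of $U^TXU$ is majorized by the spectrum of $X$, combined with the monotone rearrangement of $\tilde{W}$'s spectrum. One minor point deserves a remark: if $X$ has repeated eigenvalues the $v_i$ are not unique, but every orthonormal eigenbasis of $X$ produces a $\hat{W}$ with the same spectrum and the same value of $\langle X,\hat{W}\rangle$, so the conclusion is unaffected; likewise, only positive semidefiniteness of $X$ is actually used in the argument.
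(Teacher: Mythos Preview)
Your proof is correct and follows essentially the same route as the paper's: establish feasibility and invariance of $G$, then bound the linear term via a spectral inequality to conclude $\langle X,\hat W\rangle\le\langle X,\tilde W\rangle$. The only cosmetic difference is that you invoke von Neumann's trace inequality directly, whereas the paper derives the identical bound $\langle X,\tilde W\rangle\ge\sum_i\lambda_i(X)\lambda_{n-i+1}(\tilde W)$ by applying the Hoffman--Wielandt inequality to the pair $X$ and $-\tilde W$ and expanding the Frobenius norms.
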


\begin{proof}
First, note the $\hat{W}$ is a feasible point and $G(\hat{W})=G(\tilde{W})$, as the two matrices have the same eigenvalues.  

The proof relies on the Hoffman-Wielandt inequality \cite{hoffman1953}, which states that for any symmetric matrices $A$ and $B$, 
$$||A-B||_F^2 \geq ||\lambda(A)-\lambda(B)||^2$$
where $\lambda(A)$ denotes the vector of eigenvalues of $A$ in descending order.  When applied to the matrices $X$ and $-\tilde{W}$, we have
$$||X-(-\tilde{W})||_F^2 \geq \sum_{i=1}^n\big(\lambda_i(X)-\lambda_i(-\tilde{W})\big)^2=\sum_{i=1}^n\big(\lambda_i(X)+\lambda_{n-i+1}(\tilde{W})\big)^2.$$
Expanding these terms gives us the following:
    $$||X||_F^2+||\tilde{W}||_F^2 +2\langle X,\tilde{W} \rangle \geq ||\lambda(X)||^2+||\lambda(\tilde{W})||^2+2\sum_{i=1}^n \lambda_i(X)\lambda_{n-i+1}(\tilde{W})$$
    Using the fact that the Frobenius norm of a matrix is the norm of the eigenvalues, and using the simultaneous diagonalizability of $\hat{W}$ and $X$, we have:
   $$ \langle X,\tilde{W} \rangle \geq \sum_{i=1}^n \lambda_i(X)\lambda_{n-i+1}(\tilde{W})=\langle X,\hat{W} \rangle$$
So, $\hat{W}$ is a feasible point with an objective value no more than that of $\tilde{W}$, and is also a minimizer.
\end{proof}
}

Additionally, we present the technical lemma about the gradient of the objective function in (\ref{eqn:generalizedRelaxation}), which is paramount when deriving algorithms and optimality conditions.  First and second derivatives of the eigenvalue function have been studied extensively by Mangus \cite{magnus1985} and Andrew et.\ al.\ \cite{eigDerivatives1}.  
\begin{lem}
\label{lemma:derivatives}
Let $v_i$ denote the eigenvector corresponding to the $i^{th}$ eigenvalue of $X$.  If $\lambda_i(X)$ is a simple eigenvalue,
\begin{equation}
    \frac{d}{dX} \lambda_i(X) = v_i v_i^T
\end{equation}
\revision{If $\lambda_i(X)=\lambda_{i+1}(X)=\cdots=\lambda_{i+k}(X)$, then
\begin{equation*}
    \frac{d}{dX}\sum_{j=0}^k  \lambda_{i+j}(X) = \sum_{j=0}^k v_{i+j}v_{i+j}^T
\end{equation*}}

\end{lem}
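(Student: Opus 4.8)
The plan is to treat the two cases separately, since at a point of multiplicity the individual eigenvalues $\lambda_{i+j}(X)$ are generally not differentiable and only their sum is; this mismatch is the one real subtlety, and the statement is otherwise classical (cf.\ Magnus~\cite{magnus1985}).

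For the simple-eigenvalue case ($k=0$) I would invoke the implicit function theorem. Near a symmetric $X$ at which $\lambda_i(X)$ is simple, the system $Xv = \lambda v$, $v^Tv = 1$ has nondegenerate Jacobian in $(v,\lambda)$, so the eigenvalue $\lambda_i(\cdot)$ and a unit eigenvector $v_i(\cdot)$ (fixed up to sign) are locally $C^1$. Differentiating $X v_i = \lambda_i v_i$ gives $(dX)v_i + X(dv_i) = (d\lambda_i)v_i + \lambda_i(dv_i)$; left-multiplying by $v_i^T$ and using $v_i^T X = \lambda_i v_i^T$ (symmetry of $X$) together with $v_i^T v_i = 1$, hence $v_i^T dv_i = 0$, collapses this to $v_i^T(dX)v_i = d\lambda_i$, i.e.\ $d\lambda_i = \langle v_i v_i^T, dX\rangle$, which is the claimed formula.

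For the cluster case I would use Ky Fan's variational principle: $F_m(X) := \sum_{j=1}^m \lambda_j(X) = \max\{\langle X, UU^T\rangle : U \in \mathbb{R}^{n\times m},\, U^TU = I_m\}$, a convex function of $X$. Put $m_1 = i-1$ and $m_2 = i+k$, so that $\sum_{j=0}^k \lambda_{i+j}(X) = F_{m_2}(X) - F_{m_1}(X)$ (with $F_0 \equiv 0$ and $F_n = \mathrm{trace}$ when the cluster abuts an end of the spectrum). Since $\lambda_i(X) = \cdots = \lambda_{i+k}(X)$ has multiplicity exactly $k+1$, the strict gaps $\lambda_{m_1}(X) > \lambda_{m_2}(X)$ and $\lambda_{m_2}(X) > \lambda_{m_2+1}(X)$ hold, so the top-$m_1$ and top-$m_2$ eigenspaces of $X$ are uniquely determined, and the matrices $UU^T$ attaining the two maxima are the unique orthogonal projectors $\Pi_{m_1} = \sum_{j=1}^{m_1} v_j v_j^T$ and $\Pi_{m_2} = \sum_{j=1}^{m_2} v_j v_j^T$ onto those eigenspaces. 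Danskin's theorem, applied over the compact Stiefel constraint set with objective linear in $X$ and $X$-gradient $UU^T$ whose maximizing value is the single point $\Pi_m$, then yields that $F_{m_1}$ and $F_{m_2}$ are differentiable at $X$ with gradients $\Pi_{m_1}$ and $\Pi_{m_2}$. Subtracting, $\frac{d}{dX}\sum_{j=0}^k \lambda_{i+j}(X) = \Pi_{m_2} - \Pi_{m_1} = \sum_{j=i}^{i+k} v_j v_j^T = \sum_{j=0}^k v_{i+j}v_{i+j}^T$, which is visibly independent of the choice of orthonormal basis of the eigenspace, as it must be. An alternative route avoiding Danskin: write the sum as $\mathrm{trace}(X P(X))$ with $P(X)$ the Riesz spectral projector of the isolated cluster, which is analytic in $X$; differentiating and using $P^2 = P$, $\mathrm{trace}(P\,dP) = \frac{1}{2}d\,\mathrm{trace}(P) = 0$, and $PX(I-P) = 0$ gives $\mathrm{trace}(X\,dP) = 0$, leaving $d\,\mathrm{trace}(XP) = \langle P, dX\rangle$.

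The main obstacle is precisely that one cannot differentiate the terms $\lambda_{i+j}(X)$ individually at a multiplicity point, so the argument must handle the block as a unit — through the difference-of-Ky-Fan-functions representation (and the verification that the maximizer, hence the gradient, is unique exactly because the cluster is isolated) or, equivalently, through analyticity of the cluster's spectral projector. Everything else is routine differentiation of the eigen-equations together with standard perturbation-theory facts.
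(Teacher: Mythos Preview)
Your proof is correct and complete. The paper, however, does not actually prove this lemma: it introduces it as a ``technical lemma'' and simply cites Magnus~\cite{magnus1985} and Andrew et~al.~\cite{eigDerivatives1} for the derivatives of eigenvalue functions, then immediately moves on to use it. So you have supplied strictly more than the paper does.

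A brief comparison of substance: your simple-eigenvalue argument (differentiate $Xv_i=\lambda_i v_i$, left-multiply by $v_i^T$, use symmetry and $v_i^Tdv_i=0$) is exactly the classical derivation in Magnus, so in that sense you are aligned with the cited source. For the repeated-eigenvalue case you go a different and arguably cleaner route than the perturbation-theory literature: writing the cluster sum as $F_{m_2}-F_{m_1}$ with $F_m$ the Ky~Fan $m$-sum, and invoking Danskin's theorem once the top-$m$ eigenspace is unique, sidesteps any need to track individual (nondifferentiable) eigenvalues or eigenvector branches. Your alternative via the Riesz spectral projector is also valid and closer in spirit to analytic perturbation theory. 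One small point worth making explicit in your write-up: the lemma as stated only asserts $\lambda_i=\cdots=\lambda_{i+k}$, not that the cluster is isolated from $\lambda_{i-1}$ and $\lambda_{i+k+1}$; you correctly impose those strict gaps to get uniqueness of the maximizing projector, and it would be good to flag that this is the intended reading (without it the sum need not be differentiable).
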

Lemma \ref{lemma:derivatives} allows us to easily compute the subgradient of the objective function.
\begin{equation}
    \partial \rho(X) =\bigg\{ V \text{diag}(y_1,y_2, \ldots y_n) V^T \; \bigg| \; y_i \in \partial \rho(\lambda_i(X)) \bigg\}
\end{equation}
where $V$ denotes the matrix of eigenvectors of $X$.  We can now prove Proposition \ref{prop:linearizeInX}.
\begin{proof}
We start by considering KKT points of (\ref{eqn:linearizedinX}).  The feasible pair $(X,W)$ is a KKT point if there exists a subgradient $Z$ of $G(W)$  such that
\begin{subequations}
\begin{align}
0 &\preceq W\perp X+Z+Y\succeq 0 \label{eqn:LinearizedInXKKT_W}\\
0 &\preceq X\perp\sum_i \mu_i A_i+W + \nabla \phi(X) \succeq 0 \label{eqn:LinearizedInXKKT_X}\\
0 &\preceq Y\perp \kappa I-W\succeq 0 \label{eqn:LinearizedInXKKT_Y}
\end{align}
\end{subequations}
By Lemma \ref{lemma:derivatives}, if $W$ has eigenvectors $V^W$ and eigenvalues $w_1, w_2,\ldots, w_n$, then 
$$\partial G(W)=\bigg\{ V^W \text{diag}(z_1, z_2,\ldots, z_n) {V^W}^T \; \bigg| \; z_i \in \partial g(w) \bigg\}.$$
We start by claiming that $X$ and $W$ (and hence $Z$ and $Y$) are simultaneously diagonalizable \revision{by citing Lemma \ref{lem:simDiag}. }
Equation (\ref{eqn:LinearizedInXKKT_Y}) shows that $Y$ and $W$ are simultaneously diagonalizable.
Hence $X$, $W$, $Y$, and $Z$ are all simultaneously diagonalizable, and the KKT conditions (\ref{eqn:LinearizedInXKKT_W}) and (\ref{eqn:LinearizedInXKKT_Y}) simplify to the following.
\begin{subequations}
\begin{align}
0 &\leq \lambda_i(W)\perp  \lambda_i(X)+\lambda_i(Z)+ \lambda_i(Y)\geq 0 & \forall i=1, ..., n \label{eqn:KKT_eig_W}\\
0 &\leq \lambda_i(Y)\perp \kappa -\lambda_i(W)\geq 0& \forall i=1, ..., n \label{eqn:KKT_eig_Y}
\end{align}
\end{subequations}
If $0< \lambda_i(W)<\kappa$, we have that $ \lambda_i(Y)=0$, and so equations \eqref{eqn:KKT_eig_W} and \eqref{eqn:KKT_eig_Y} are satisfied if $\lambda_i(X)+ \lambda_i(Z)=0$. By construction of $g$ from Lemma \ref{lemma:inverse}, there exists $ w_i \in \partial \rho( \lambda_i(X))$ and $z_i\in- \partial g(w_i)$ such that $\lambda_i(Z)=z_i, \lambda_i(W)=w_i$ is a solution.

When the upper bound on the eigenvalue of $W$ is an active constraint, i.e. when $ \lambda_i(W)=\kappa$, then there exists $z_i \in \partial g(\kappa)$ such that $\lambda_i(X) +z_i\leq 0$.  Because $\partial g(\kappa)=\{0\}$, $ \lambda_i(X)=0$, which is to say $ \lambda_i(W)\in \partial \rho( \lambda_i(X))$.

Finally, we consider when $ \lambda_i(W)=0$.  Equation (\ref{eqn:KKT_eig_W})  becomes 
$$ \lambda_i(X) \geq -z_i$$
for some $z_i \in \partial g(0)$.  By Lemma \ref{lemma:inverse}, we have that $0\in \partial \rho(-z_i)$.  Because $\rho$ is concave and nondecreasing, if $0 \in \partial \rho(x_1)$, then $0 \in \partial \rho(x_2)$ for all $x_2 \geq x_1$, and so $\lambda_i(W)=0\in \partial \rho(\lambda_i(X))$.

We can now say that, in general, any $KKT$ point satisfies $\lambda_i(W)\in \partial \rho( \lambda_i(X))$ for $i=1,..,n$, and by Lemma \ref{lemma:derivatives}, $W\in \partial \rho(x)$.  The KKT conditions for (\ref{eqn:generalizedRelaxation}) state that there exists a $U \in \partial \rho(x)$ such that
$$0 \preceq X\perp\sum \mu_i A_i+U + \nabla \phi (X) \succeq 0$$
With the assignment $U=W$, it is clear that if $(X,W)$ is a KKT point of (\ref{eqn:linearizedinX}), then $X$ is a KKT point of (\ref{eqn:generalizedRelaxation}).

Conversely, consider any $X$ that is a KKT of (\ref{eqn:generalizedRelaxation}) with dual variable $\mu$.  Then, the assignment $W\in \partial \rho(x)$ and $Y=0$ satisfy (\ref{eqn:LinearizedInXKKT_X}) and (\ref{eqn:LinearizedInXKKT_Y}).  By Lemma \ref{lemma:inverse}, we have that there exists a $Z \in \partial G(W)$ such that $Z=-X$ and (\ref{eqn:LinearizedInXKKT_W}) is satisfied.
\end{proof}
Such a function is shown for various choices of nonconvex regularizers in Table \ref{table:functionsG}, and can be easily verified by showing that equation \eqref{eqn:derivativeInverse} holds.  
 We note that the function $G(W)$ is used primarily for theoretical analysis and derivation of algorithms.  In practice, one only needs the function $\rho'(x)$.

    \begin{table} 
    \caption{Function $G(W)$ and constants $\kappa$ that satisfy the conditions in Proposition \ref{prop:linearizeInX} for various concave relaxations of the rank function.}
\centering
\begin{tabular}{c|c|c|c}
 &$\partial g(w)$& $G(W)$& $\kappa$\\\hline
Trace Inverse & $ \gamma+ w^{-\frac{1}{2}} $& $\text{trace}(\gamma W -2 \sqrt{W}) $ & $ \frac{1}{\gamma}$ \\\hline
 Capped $l_1$ norm&$\frac{-1}{\gamma}$ &  $\frac{-1}{\gamma} \text{trace}(W)$& $\gamma$ \\ \hline
 LogDet& $\gamma -\frac{\gamma}{w}$ &$\gamma \text{trace}(W)-\gamma \log \text{det}(W)$& $1$\\\hline
 Schatten-p Norm& $\gamma-\big( \frac{w}{p}\big) ^\frac{1}{p-1}$ &$\text{trace}(\gamma W -\frac{2-p}{p} W^{\frac{p}{p-2}})$&$ \frac{p}{2\gamma} \gamma^{\frac{p}{2}-1}$\\ \hline
 SCAD& $-\alpha \gamma +(\alpha-1) w$ & $\text{trace}(\frac{\alpha -1}{2} W^2 - \alpha \gamma W)$& $ \gamma $\\ \hline
Laplace& $ -\frac{1}{\gamma}\log(\frac{w}{\gamma})$&$\sum_i \frac{\lambda_i(W)}{\gamma}(\log( \frac{\lambda_i(W)}{\gamma})-1)$ &$\gamma$
\\
\end{tabular}
\label{table:functionsG}
\end{table}

\subsection{Low-Rank factorization}
While the MM algorithm is efficient in the non-symmetric case, with each iteration having closed form updates which can be calculated in $O(nm^2)$ time, the algorithm is not scalable in the positive semidefinite case, as it needs to solve a semidefinite program at each iteration.  Instead, we take advantage of the low rank factorization for semidefinite programs as presented by Burer and Monteiro \cite{burer_monteiro2003} and utilized to solve the nuclear norm minimization problem by Tasissa and Lai \cite{tasissa_lai2019}.
Let $r$ be an upper bound on the rank of the matrix we seek to reconstruct.  Then, if $X$ is positive semidefinite, we have that there exists a matrix $P\in \mathbb{R}^{n \times r}$ such that $X=PP^T$. 

\begin{equation}
\label{eqn:BM_Wn}
\begin{aligned}
& \underset{P\in \mathbb{R}^{n \times r}, W \in \mathcal{S}_+^n}{\text{min}}
& \langle PP^T, W \rangle +G(W) + \phi(PP^T)\\
& \text{subject to}
& \mathcal{A}(PP^T)=b,\; \revision{0 \preceq {W} \preceq \kappa I}\\
\end{aligned}
\end{equation}

While $X$ is replaced with a variable of drastically reduced size, $W$ is left as a positive semidefinite matrix of size $n$.  To reduce the size of $W$, we propose minimizing the rank of $P^TP$ instead of the rank of $PP^T$.  

\begin{equation}
\label{eqn:BM_Wr}
\begin{aligned}
& \underset{P\in \mathbb{R}^{n \times r}, W \in \mathcal{S}_+^r}{\text{min}}
& \langle P^TP, W \rangle +G(W) + \phi(PP^T)\\
& \text{subject to}
& \mathcal{A}(PP^T)=b,\; \revision{0 \preceq {W} \preceq \kappa I}\\
\end{aligned}
\end{equation}

Intuitively, this should be equivalent due to the fact that the non-zero eigenvalues of $PP^T$ are equivalent to the nonzero eigenvalues of $P^TP$. \revision{We prove this intuition in the following proposition.}

\begin{prop}
Let $P^*\in \mathbb{R}^{n \times r}$ have the singular value  decomposition  $P^*=\sum_{i=1}^r v_i u_i^T \sigma_i^P$.  \revision{If $(P^*, W_n)$ is an optimizer of \eqref{eqn:BM_Wn}, then $$W_n^*=\sum_{i=1}^n \lambda^W_iv_i v_i^T =\sum_{i=1}^r\lambda^W_i v_i v_i^T + \kappa \sum_{i=r+1}^n v_iv_i^T,$$ and if $(P^*, W_r)$ is an optimizer of \eqref{eqn:BM_Wr}, then $W^*_r=\sum_{i=1}^r \lambda^W_i u_i u_i^T $.  Furthermore,} $(P^*, W_r^*)$ is an optimizer of \eqref{eqn:BM_Wr} if and only if $(P^*, W_n^*)$ is an optimizer of \eqref{eqn:BM_Wn}. 
\end{prop}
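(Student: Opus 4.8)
The plan is to eliminate the variable $W$ from each of \eqref{eqn:BM_Wn} and \eqref{eqn:BM_Wr} by solving the inner minimization over $W$ for fixed $P$, to show that the two resulting objective functions of $P$ are \emph{identical}, and then to read off both the stated formulas for $W_n^*,W_r^*$ and the equivalence of optimizers. Fix $P\in\mathbb{R}^{n\times r}$ with SVD $P=\sum_{i=1}^r v_iu_i^T\sigma_i^P$, let $V=[v_1,\dots,v_n]$ be an orthonormal basis of $\mathbb{R}^n$ extending the left singular vectors, and let $U=[u_1,\dots,u_r]$ be the (complete) orthonormal basis of $\mathbb{R}^r$ of right singular vectors. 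In \eqref{eqn:BM_Wn} the substitution $\tilde W=V^TWV$ turns the inner problem into $\min\{\langle\Sigma,\tilde W\rangle+G(\tilde W):0\preceq\tilde W\preceq\kappa I\}$ with $\Sigma=\mathrm{diag}((\sigma_1^P)^2,\dots,(\sigma_r^P)^2,0,\dots,0)$, since $G$ is unitarily invariant and the spectral constraints are preserved; similarly the inner problem of \eqref{eqn:BM_Wr} becomes, after $\tilde W=U^TWU$, the problem $\min\{\langle\mathrm{diag}((\sigma_1^P)^2,\dots,(\sigma_r^P)^2),\tilde W\rangle+G(\tilde W):0\preceq\tilde W\preceq\kappa I\}$ over $\mathcal{S}_+^r$.

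The key step, paralleling the simultaneous-diagonalization argument in the proof of Proposition~\ref{prop:linearizeInX}, is that an optimal $\tilde W$ may be taken diagonal. Replacing any feasible $\tilde W$ by the diagonal matrix $D$ carrying its diagonal entries leaves $\langle\Sigma,\tilde W\rangle$ unchanged (because $\Sigma$ is diagonal), keeps $D$ feasible (diagonal entries of $\tilde W$ lie in $[0,\kappa]$), and does not increase $G$, since $\mathrm{diag}(\tilde W)$ is majorized by the eigenvalues of $\tilde W$ (Schur--Horn) and $g$ is convex by Lemma~\ref{lemma:convex}; alternatively, one may invoke Lemma~\ref{lem:simDiag}, noting that the Hoffman--Wielandt step in its proof never uses positive definiteness of the first matrix, so it applies to the positive semidefinite $PP^T$ as well. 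With $\tilde W=\mathrm{diag}(w_1,\dots,w_n)$ the objective of the first reduced problem separates as $\sum_{i=1}^r\big((\sigma_i^P)^2 w_i+g(w_i)\big)+\sum_{i=r+1}^n g(w_i)$ over $w_i\in[0,\kappa]$; since $g$ is decreasing the last $n-r$ terms are minimized at $w_i=\kappa$, and crucially $g(\kappa)=\int_\kappa^\kappa q(t)\,dt=0$. Writing $\lambda_i^W\in[0,\kappa]$ for a minimizer of the scalar problem $\min_{w\in[0,\kappa]}(\sigma_i^P)^2 w+g(w)$, this gives $W_n^*=V\tilde W^*V^T=\sum_{i=1}^r\lambda_i^W v_iv_i^T+\kappa\sum_{i=r+1}^n v_iv_i^T$ with inner optimal value $\sum_{i=1}^r\big((\sigma_i^P)^2\lambda_i^W+g(\lambda_i^W)\big)$. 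The second reduced problem is handled identically: its optimal $\tilde W$ is the diagonal matrix $\mathrm{diag}(\lambda_1^W,\dots,\lambda_r^W)$ with exactly the same scalars $\lambda_i^W$ (the $r$ scalar subproblems are literally the same), so $W_r^*=\sum_{i=1}^r\lambda_i^W u_iu_i^T$ and the inner optimal value is $\sum_{i=1}^r\big((\sigma_i^P)^2\lambda_i^W+g(\lambda_i^W)\big)$ as well — the extra $n-r$ eigenvalues of $W_n^*$ contribute $(n-r)g(\kappa)=0$.

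Adding the term $\phi(PP^T)$, which is unchanged in both, we conclude that after eliminating $W$ the problems \eqref{eqn:BM_Wn} and \eqref{eqn:BM_Wr} have the \emph{same} objective function of $P$. Hence $P^*$ minimizes the reduced objective of \eqref{eqn:BM_Wn} iff it minimizes that of \eqref{eqn:BM_Wr}; and since the $W_n^*$ (resp.\ $W_r^*$) displayed above is an optimal choice of $W$ given $P^*$ for \eqref{eqn:BM_Wn} (resp.\ \eqref{eqn:BM_Wr}) irrespective of whether $P^*$ is globally optimal, it follows that $(P^*,W_n^*)$ is an optimizer of \eqref{eqn:BM_Wn} iff $P^*$ minimizes the reduced objective iff $(P^*,W_r^*)$ is an optimizer of \eqref{eqn:BM_Wr}, which is the claim. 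The main obstacle is the diagonalization step for \eqref{eqn:BM_Wn}, where $PP^T$ is only positive semidefinite (so Lemma~\ref{lem:simDiag} is not applicable verbatim); this is resolved by the Schur--Horn/convexity argument above, together with the identity $g(\kappa)=0$ that makes the "extra" eigenvalues of $W_n^*$ costless. A minor technical point to verify is that $g$ is finite on $[0,\kappa]$ so the scalar subproblems are well posed; the constraint $W\preceq\kappa I$ keeps the feasible $W$-set compact, and for each regularizer in Table~\ref{table:functionsG} this is immediate.
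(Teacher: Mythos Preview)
Your proof is correct and takes a genuinely different route from the paper. The paper first establishes the form of $W_n^*$ and $W_r^*$ by appealing to Proposition~\ref{prop:linearizeInX} (so that $\lambda_i^W\in\partial\rho((\sigma_i^P)^2)$), and then argues the equivalence of optimizers by a \emph{descent-direction correspondence}: given a feasible descent direction $(\Delta P,\Delta W_r)$ for \eqref{eqn:BM_Wr} at $(P^*,W_r^*)$, it explicitly constructs $\Delta W_n=V_rU^T\Delta W_rUV_r^T$ and checks, using $P^*W_r^*=W_n^*P^*$ and $0\in\partial g(\kappa)$, that the directional derivative in \eqref{eqn:BM_Wn} is the same; the converse is handled symmetrically. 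Your argument instead eliminates $W$ altogether by computing the inner value function, using the Schur--Horn/convexity diagonalization (a clean substitute for Lemma~\ref{lem:simDiag} in the PSD case) together with the identity $g(\kappa)=0$ to show that the two reduced objectives in $P$ are literally equal. This buys you a direct equivalence of \emph{global} optimizers (and, a fortiori, of local optimizers and stationary points) in one stroke, and it makes transparent why the extra $n-r$ eigenvalues of $W_n^*$ are costless. The paper's approach, by contrast, stays at the level of first-order conditions and dovetails with the KKT-style analysis used in Proposition~\ref{prop:linearizeInX}, but is somewhat more laborious and, read literally, establishes only equivalence of points with no feasible descent direction.
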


\begin{proof}
\revision{We start by proving that $W_n^*$ and $W_r^*$ have the eigenvalue decompositions stated in the proposition.  By the same reasoning as  in Proposition \ref{prop:linearizeInX}, any matrix $W\in \partial \rho(P^*{P^*}^T)$ is an optimizer to the convex semidefinite program:
\begin{equation*}
    \begin{array}{cc}
         \underset{W\in \mathcal{S}_+^n}{\text{min}}& \langle P^*{P^*}^T, W\rangle +G(W) 
         \\\text{subject to} 
         & 0 \preceq W\preceq\kappa I
    \end{array}
\end{equation*}
So, if $P^*{P^*}^T=\sum_{i=1}^r (\sigma_i^P)^2 v_iv_i^T$, then there is a minimizer $(P^*,W_n^*)$ such that $W_n^*$ has the eigendecompositon $\sum_i^r \lambda_i^W v_iv_i^T +\sum_{i=r+1}^n \kappa v_iv_i^T$, where $\lambda_i^W \in \partial \rho\big((\sigma_i^P)^2\big)$ and $\kappa=\sup \partial \rho(0)$.  Likewise, $(P^*,W_r)$ is an optimizer of \eqref{eqn:BM_Wr}, then $W^*_r=\sum_{i=1}^r \lambda_i^W u_iu_i^T$ is an optimizer, where $\lambda_i^W \in \partial \rho\big((\sigma_i^P)^2\big)$.
}

\revision{Next, we will show that if $(\Delta P, \Delta W_r)$ was a feasible descent direction in \eqref{eqn:BM_Wr} at $(P^*, W_r^*)$, then we can construct a feasible direction for \eqref{eqn:BM_Wn} at $(P^*, W_n^*)$, and vice versa.}  If $(\Delta P, \Delta W_r)$ was a feasible descent direction, then, there exists a subgradient $Z_r \in \partial G(W^*_r)$ such that 
\begin{equation}
    \revision{2}\langle P^* W^*_r +  \nabla \phi(P^*{P^*}^T) P^*, \Delta P \rangle + \langle {P^*}^T P^* + Z_r, \Delta W_r \rangle < 0.
\end{equation}
We claim that $( \Delta P, \Delta W_n)$ is a descent direction in \eqref{eqn:BM_Wn} with $$\Delta W_n= V_rU^T\Delta W_r  UV_r^T,$$ where $U \in \mathbb{R}^{r \times r}$ is the matrix whose columns are the eigenvectors of $W_r$, and $V \in \mathbb{R}^{n \times r}$ is the matrix whose columns are the first $r$ eigenvectors of $W_n$.  First note that, $P^* W_r = \sum_{i=1}^r v_i u_i \lambda_i^W \sigma_i^P=W_n^* P^*$, and $\Delta P$ is a feasible direction in \eqref{eqn:BM_Wn}.  

Next, consider the gradient of the objective of \eqref{eqn:BM_Wn} with respect to $W$, 
 \begin{align*}
     P^* {P^*}^T+ \nabla G(W^*_n)=\sum_{i=1}^r v_i v_i^T \big({\sigma_i^P} + z_i\big)+\sum_{i=r+1}^n v_i v_i^T (z_\kappa)
 \end{align*}
 where $z_i \in \partial g(\lambda_i^W)$ and $z_\kappa \in \partial g(\kappa)$.  Specifically, we chose $z_i= \lambda_i(Z_r)$, $Z_r$ be the $r$ by $r$ matrix with eigenvectors $U$ and eigenvalues $z_1, \ldots , z_r$ so that $Z_r \in G(W_r)$. 
 By Lemma \ref{lemma:inverse}, $0 \in \partial g(\kappa)$, and so the rank $r$ matrix $$Z_n:=V_r U^T \big( {P^*}^TP^*+ Z_r\big) U V_r^T$$ is a subgradient of $G$ with respect to $W_n$. Consider the inner product of the gradient of the objective of \eqref{eqn:BM_Wn} with respect to $W_n$ and the proposed descent direction for $W_n$.
\begin{align*}
    &\langle P^*{P^*}^T  + Z_n, \Delta W_n \rangle = \langle V_r U^T \big( {P^*}^TP^*+ Z_r\big) U V_r^T, \Delta W_n \rangle \\ =& \langle  \big( {P^*}^TP^*+ Z_r\big) , U V_r^T\Delta W_n V_r U^T \rangle \\= &  \langle  \big( {P^*}^TP^*+ Z_r\big) , \Delta W_r \rangle 
\end{align*}
Combining these facts gives us that $(\Delta P, \Delta W_n)$ is a descent direction:
 \begin{align*}
  &\revision{2}\langle W^*_n P^*  +  \nabla \phi(P^*{P^*}^T) P^*, \Delta P \rangle + \langle P^*{P^*}^T  + Z_n, \Delta W_n \rangle \\
    =&\revision{2}\langle P^* W^*_r +  \nabla \phi(P^*{P^*}^T) P^*, \Delta P \rangle + \langle {P^*}^T P^* + Z_r, \Delta W_r \rangle < 0
 \end{align*}
The proof of the other direction is similar.
\end{proof}

 \subsection{Extension to Nonsymmetric Matrices}

 To extend these methods to general nonsymmetric matrices $X\in \mathbb{R}^{m \times n}$, we can minimize the rank of PSD matrix $X^TX$, as done by Mohan and Fazel \cite{mohan_fazel2010}. However, this is computationally inefficient as each iteration requires finding the eigendecomposition of $X^TX$.  With this in mind, we put forth a separate extension in which we minimize the rank of the following auxiliary variable
 \begin{equation*}
 Z=\begin{bmatrix}
 G & X^T \\ X & B
 \end{bmatrix}
 \end{equation*}
It was shown by Liu et. al. that for any $X$, there exists $G$ and $B$ such that $\text{rank}(X)=\text{rank}(Z)$ and $Z\succeq 0$ \cite{doi:10.1137/090755436}. We can thus solve the following minimization problem
\begin{equation}
\begin{aligned}
& \underset{Z,W}{\text{min}}
& \langle Z, W \rangle +G(W) + \phi(X) \\
& \text{subject to}
& \mathcal{A}(X)=b\\
&& Z =\begin{bmatrix}
 G & X^T \\ X & B
 \end{bmatrix}
\succeq 0\\
&&\revision{0 \preceq {W} \preceq \kappa I}
\end{aligned}
\end{equation}
While inefficient on its own due to the matrix $W$ being $(m+n) \times (m+n)$, this formulation allows us to utilize the Burer-Monteiro approach which allowed us to efficiently solve the semidefinite case in Algorithm \ref{Agorthim:NoiseAltMin}.  We utilize the same upper bound $r$ on the rank of $X$ as before and introduce the matrix $P\in \mathbb{R}^{(m+n) \times r}$ such that $Z=PP^T$.  We decompose $P$ into $P_m$ and $P_n$ such that $P=\begin{bmatrix}
P_n\\ P_m
\end{bmatrix}$ so that $X=P_m P_n^T$. As before, we minimize the rank of $P^TP=P_m^TP_m+P_n^TP_n$. 
\begin{equation}
\begin{aligned}
& \underset{W,P_m, P_n}{\text{min}}
& \langle P_m^TP_m +P_n^TP_n, W \rangle +G(W)+ \phi(P_mP_n^T) \\
& \text{subject to}
& \mathcal{A}(P_mP_n^T)=b,\;\;\revision{0 \preceq {W} \preceq \kappa I} \\
\label{eqn:genAltMin}
\end{aligned}
\end{equation}
We note that for the special case of minimizing the nuclear norm, $W=I$, we have the well known alternating minimization method when using a quadratic loss function \cite{srebro1}\cite{srebr02}\cite{JMLR:v16:hastie15a} as follows: 
\begin{equation}
\begin{aligned}
& \underset{P_m, P_n}{\text{min}}
& ||P_n||_F^2 +||P_m||_F^2 + \frac{\beta}{2}||\mathcal{A}(P_mP_n^T)-b||^2. \\
\end{aligned}
\end{equation}

\section{Algorithms}

In most practical applications, we expect noise in our measurements, and thus an equality constraint may not be practical.  For the algorithms in this section, we restrict our focus to the problem of rank minimization with a quadratic loss function, $\phi(X)= \frac{\beta}{2} || \mathcal{A}(X)-b||_F^2$, and no linear constraints. 
Utilizing the low-rank factorization technique, \ifodd\value{verbose} {for positive semidefinite matrices, we seek to solve the following
\begin{align} 
\label{eqn:PSDnoise}
 &\underset{P, W}{\text{min}} & \langle P^TP, W \rangle +G(W)+ \frac{\beta}{2} ||\mathcal{A}(PP^T)-b||^2&:=F(P,W) \\
 & \text{subject to}
& 0 \succeq W \succeq \kappa I \nonumber&
\end{align}
Likewise,}\fi for the case of non symmetric matrices, we seek to minimize  
\begin{equation}  
 \label{eqn:rectnoise}
 \begin{array}{ll}
 \underset{X, W}{\text{min}} &\langle P_m^TP_m+P_n^TP_n, W \rangle +G(W)+ \frac{\beta}{2} ||\mathcal{A}(P_mP_n^T)-b||^2\\
  \text{subject to}\;\;\: & \revision{0 \preceq {W} \preceq \kappa I}
  \end{array}
\end{equation}

\ifodd\value{verbose} {
\subsection{PSD matrices}

In the case of PSD matrices, we solve \eqref{eqn:PSDnoise} by block coordinate descent, alternating between optimizing for $P$ and $W$.    Because the $P$ subproblem is nonconvex, a proximal term is added to ensure convergence.  The update step for $P$ is to solve $$P^k=\text{argmin}_P \langle P^TP, W^{k-1} \rangle +\frac{\beta}{2}||\mathcal{A}(PP^T) -b||_F^2 + \frac{c}{2}||P-P^{k-1}||_F^2,$$ which can be solved with gradient descent using Barzilai-Borwein step lengths, as in \cite{tasissa_lai2019}.  Note that it is unnecessary to calculate every entry of $PP^T$, as the linear operator $\mathcal{A}$ typically only requires a sparse set of entries.  The gradient, $$\nabla_PF(P,W)=PW+\beta \mathcal{A^*}(\mathcal{A}(PP^T)-b)+ c(P-P^{k-1}),$$ can be computed in $\mathcal{O}\big( r|\Omega|) \big)$ operations, where $\Omega$ is set of entries in the matrix $X$ needed in order to compute $\mathcal{A}(X)$.  In the case of matrix completion, this is simply the number of known entries in the matrix.

The update for $W$ is derived from Proposition \ref{prop:linearizeInX}, and is similar to that of other iteratively reweighted methods \cite{fazel_hindiNone} \cite{mohan_fazel2010} \cite{lai_xu2013}.
$$W^{k}=\nabla \rho({P^{k}}^T P^k)$$
Because we are minimizing the rank of the the smaller matrix $P^TP$, this update is calculated in $\mathcal{O}(r^3)$ operations.  
 \begin{algorithm}
 \caption{Alternating Minimization with General Nonconvex Regularizers (GenAltMin)  }
  \label{Agorthim:NoiseAltMin}
 \begin{algorithmic}[1]
 \renewcommand{\algorithmicrequire}{\textbf{Input:}}
 \renewcommand{\algorithmicensure}{\textbf{Output:}}
 \REQUIRE $\mathcal{A}, b$
 \ENSURE Stationary point $X$ of (\ref{eqn:rectnoise})
 \\ \textit{Initialization} :$ P^0=\text{rand}(n,r)$, $W^0=I$.
  \FOR {$k = 1,..,$}
\STATE Solve $$P^k=\text{argmin} \langle P^TP, W^{k-1} \rangle + \frac{\beta}{2} ||\mathcal{A}(PP^T)-b||^2 +\frac{c}{2} || P-P^{k-1}||_F^2$$ 
  \STATE $[V^k, \Sigma^k]=\text{eig}({P^k}^TP^k)$
  \STATE $W^{k}=V^k \rho'( \Sigma^k) {V^K}^T$
\STATE Check for Convergence
  \ENDFOR
 \RETURN $X$ 
 \end{algorithmic} 
 \end{algorithm}
The method is outlined in Algorithm \ref{Agorthim:NoiseAltMin}.

}\fi
\subsection{Alternating Methods for Rectangular Matrices}

 While the formulation for rectangular matrices could be solved by simply using Algorithm \ref{Agorthim:NoiseAltMin}, we propose an ADMM algorithm wherein we alternate over the variables $P_m$, $P_n$, and $W$.  By doing so, the subproblems in $P_m$ and $P_n$ are strongly convex.  The subproblems are as follows:

    \begin{equation}
    \begin{aligned}
    \label{eqn:Pm_update}
P_m^k&=\text{argmin}_{P_m} \langle P_m^T P_m, W \rangle +\frac{\beta}{2} ||\mathcal{A}(P_m P_n^T)-b||_F^2\\
P_n^k&=\text{argmin}_{P_n} \langle P_n^T P_n, W \rangle +\frac{\beta}{2} ||\mathcal{A}(P_m P_n^T)-b||_F^2    
   \end{aligned}
\end{equation}
The gradients of which can be calculated as 
\begin{equation*}
    \begin{aligned}
    \nabla_{P_m}F(P_m,P_n,W)&=P_m W+\beta \mathcal{A}^*(\mathcal{A}(P_mP_n^T)-b)P_n\\
        \nabla_{P_n}F(P_m,P_n,W)&=P_n W+\beta \mathcal{A}^*(\mathcal{A}(P_mP_n^T)-b)^TP_m
    \end{aligned}
\end{equation*}
\revision{where $F(P_m,P_n,W)$ is the objective function of \eqref{eqn:rectnoise}.}

\ifodd\value{verbose}{
$W$ is updated as in Algorithm \ref{}
}\else{
The update for $W$ is derived from Proposition \ref{prop:linearizeInX}, and is similar to that of other iteratively reweighted methods \cite{fazel_hindiNone} \cite{mohan_fazel2010} \cite{lai_xu2013}.
$$W^{k}=\nabla \rho({P^{k}}^T P^k)$$
Because we are minimizing the rank of the the smaller matrix $P^TP$, this update is calculated in $\mathcal{O}(r^3)$ operations.  }\fi

 \begin{algorithm}
 \caption{Alternating Minimization for Rank Minimization with a General Nonconvex Regularizer (GenAltMin)}
  \label{Agorthim:NoiseAltMin}
 \begin{algorithmic}[1]
 \renewcommand{\algorithmicrequire}{\textbf{Input:}}
 \renewcommand{\algorithmicensure}{\textbf{Output:}}
 \REQUIRE $\mathcal{A}, b$
 \ENSURE Stationary point $X$ of (\ref{eqn:rectnoise})
 \\ \textit{Initialization} :$ P^0=\text{rand}(n,r)$, $W^0=I$.
  \FOR {$k = 1,..,$}
\STATE Solve $$P_m^k=\text{argmin}_{P_m} \langle P_m^T P_m, W \rangle +\frac{\beta}{2} ||\mathcal{A}(P_m P_n^T)-b||_F^2$$ 

\STATE Solve $$P_n^k=\text{argmin}_{P_n} \langle P_n^T P_n, W \rangle +\frac{\beta}{2} ||\mathcal{A}(P_m P_n^T)-b||_F^2 $$
  \STATE $[V^k, \Sigma^k]=\text{eig}({P^k}^TP^k)$
  \STATE $W^{k}=V^k \rho'( \Sigma^k) {V^K}^T$
\STATE Check for Convergence
  \ENDFOR
 \RETURN $X$ 
 \end{algorithmic} 
 \end{algorithm} 

\ifodd\value{verbose} {
While the gradients can be calculated efficiently, this method requires solving subproblems to optimality and may not be efficient for large matrices.  A common technique to allow for closed form updates is to introduce an auxiliary variable $Z$ as follows \cite{Wen2012}
\begin{align}
     &\underset{P_n, P_m, Z, W}{\text{min}}&\langle P_m^TP_m+P_n^TP_n, W \rangle +G(W)+ \frac{\beta}{2} ||P_m^T P_n -Z||^2 \\
     &\text{subject to} & \mathcal{A}(Z)=b
\end{align}
Alternating between solving each of the four variables to optimality gives the following updates.
\begin{align}
    \label{eqn:closedFormSolutions}
    P_m^{k+1}&=Z^{k}P_n^{k}(\frac{1}{\beta} W^{k}+{P_n^{k}}^T{P_n^{k}})^{-1} \\
    P_n^{k+1}&={Z^{k}}^TP_m^{k+1}(\frac{1}{\beta}  W^{k}+{P_m^{k}}^T{P_m^{k}})^{-1} \\
    Z^{k+1}&=  P_m^{k+1}{ P_n^{k+1}}^T - \mathcal{A}^*(\mathcal{A}\mathcal{A}^*)^{-1}\big( \mathcal{A}( P_m^{k+1}{ P_n^{k+1}}^T)-b\big) \\
    W^{k+1}&=\nabla F ({P_m^{k+1}}^TP_m^  {k+1}+{{P_n^{k+1}}^T}P_n^{k+1})
\end{align}
In the case of matrix completion, the $Z$ update is simply 
$$Z^{k+1}=  P_m^{k+1}{ P_n^{k+1}}^T - \mathcal{P}_\Omega(P_m^{k+1}{ P_n^{k+1}}^T-M)$$
Because of the low-rank-plus-sparse structure of $Z$, the computational complexity of the $P_m$ and $P_n$ updates is $\mathcal{O}(r|\Omega|)$.  

While effective on its own, convergence rates are improved by utilizing successive over relaxation.  The classical successive over relaxation scheme (SOR) would be as follows:
\begin{align}
\label{eqn:SOR_updates}
    \tilde{P_m}&=Z^{k}P_n^{k}(\frac{1}{\beta} W^{k}+{P_m^{k}}^T{P_m^{k}})^{-1} \\
    P_m^{k+1}&=\omega  \tilde{P_m}+ (1-\omega) P_m^{k}\\
    \tilde{P_n}&={Z^{k}}^TP_m^{k+1}(\frac{1}{\beta} W^{k}+{P_n^{k}}^T{P_n^{k}})^{-1} \\
    P_n^{k+1}&=\omega  \tilde{P_n}+ (1-\omega) P_n^{k}\\
    \tilde{W}&= \nabla F ({P_m^{k+1}}^TP_m^  {k+1}+{{P_n^{k+1}}^T}P_n^{k+1})\\
        W^{k+1}&=\omega  \tilde{W}+ (1-\omega) W^{k}\\
    Z^{k+1}&=  P_m^{k+1}{ P_n^{k+1}}^T - \mathcal{A}^*(\mathcal{A}\mathcal{A}^*)^{-1}\big( \mathcal{A}( P_m^{k+1}{ P_n^{k+1}}^T)-b\big)
\end{align}
for $\omega \geq 1$. However, the authors of \cite{Wen2012} propose a slight deviance from the classical SOR scheme by multiplying by the term $({P_m^{k+1}}^TP_m^{k+1})^{-1}({P_m^{k+1}}^TP_m^{k})$ in the $P_n$ update.  To extend this concept to our algorithm, we propose the following SOR-like scheme.
\begin{align}
P_n^{k+1}&=\omega  \tilde{P_n}+ (1-\omega) P_n^{k} \big( ({P_m^{k+1}}^TP_m^{k+1}+\frac{1}{\beta} W^{k+1})^{-1}({P_m^{k+1}}^TP_m^{k}+\frac{1}{\beta} W^{k+1}) \big)
\end{align}
While theoretical justification for this deviance is lacking, computational results show its effectiveness in both \cite{Wen2012} and in Section 4.  

To help prevent the algorithm from stopping prematurely at bad local optimums, we initialize with a large value of $\gamma$ and decrease $\gamma$ at each iteration. In order to improve convergence rates, we also start with a low value of $\beta$ so as to quickly obtain a low rank matrix before increasing $\beta$.  The parameters are given a lower and upper bound, respectively.  Additionally, SOR schemes perform best when calculating the extrapolation term $\omega$ adaptively at each iteration, based on a trust region method.  The authors of \cite{Wen2012} propose adjusting the extrapolation term based off of the ratio of the objective function at the previous iteration to the objective function at the current iteration.  We define the ratio as 
$$ \xi=\frac{\rho_{\gamma_k}({P^{k}}^TP^k)+\beta^k||\mathcal{A}({P^{k}}^TP^k)-b||^2}{\rho_{\gamma_k}({P^{k-1}}^T P^{k-1})+\beta^k||\mathcal{A}({P^{k-1}}^TP^{k-1})-b||^2}$$
When $1 > \xi \geq 0.85$, we increment $\omega$.  When $\xi \geq 1$, this may indicate over-extrapolation, and we should reset $\omega$ to 1 and we should not accept the current iteration.  However, due to the objective function changing at each iteration, we propose a update based on filter methods\cite{filter}.  At a given iteration, we compare two additional ratios
$$ \xi_1=\frac{\rho_{\gamma_k}({P^{k}}^TP^k)}{\rho_{\gamma_k}({P^{k-1}}^T P^{k-1})}, \vspace{0.5cm} \xi_2=\frac{||\mathcal{A}({P^{k}}^TP^k)-b||^2}{||\mathcal{A}({P^{k-1}}^TP^{k-1})-b||^2}$$
If either of the ratios is less than 1, then we accept the current iteration.  The method is outlined in Algorithm \ref{alg:GenAltMin}.

  \begin{algorithm}
 \caption{ }
 \label{alg:GenAltMin_SOR}
 \begin{algorithmic}[1]
 \renewcommand{\algorithmicrequire}{\textbf{Input:}}
 \renewcommand{\algorithmicensure}{\textbf{Output:}}
 \REQUIRE $\mathcal{A}, b$
 \ENSURE Stationary point $X$ of (\ref{eqn:genAltMin})
 \\ \textit{Initialization} :$ P_n^0=\text{rand}(n,r)$, $W^0=I$.
  \FOR {$k = 1,..,$}
  \STATE Update $P_m^k, P_n^k, W^k, Z^k$ using Equation \eqref{eqn:SOR_updates}
\STATE Calculate ratios $\xi$, $\xi_1$, and $\xi_2$
\IF{$\xi \geq 1$}
\STATE $\omega=0$
\IF{$\xi_1\geq 1$ and $\xi_2 \geq 1$}
  \STATE Update $P_m^k, P_n^k, W^k, Z^k$ using Equation \eqref{eqn:SOR_updates}
  \ENDIF
\ELSIF{$\xi \geq 0.85$}
\STATE Increase $\omega$
\ENDIF
\STATE $\beta^{k+1}= \text{min}\big( 1.2\beta^k, \beta_{\text{max}})$, $\gamma^{k+1}= \text{max}\big( 0.8\gamma^k, \gamma_{\text{min}})$
  \ENDFOR
 \RETURN $X=P_mP_n^T$ 
 \end{algorithmic} 
 \end{algorithm}
 }\fi

\subsection{Alternating Steepest Descent}

  For alternating minimization without a regularizer, it has been shown computationally effective to, instead of solving subproblems to optimality, take one step in the gradient direction at each iteration \cite{Tanner2016LowRM}.  For the $P_n$ and $P_m$ updates, we can calculate the steepest descent step size.  Let $d_m$ and $d_n$ denote the gradient in the $P_m$ and $P_n$ subproblems.  Then, the steepest descent step sizes $t_m$ and $t_n$ for each subproblem respectively are can be calculated as follows
  \begin{equation*}
    \begin{aligned}
    t_m=&  \frac{\beta \langle \mathcal{A}(d_mP_n^T), \mathcal{A}(P_mP_n^T)-b\rangle +2 \langle P_m^Td_m,W\rangle}{\beta ||\mathcal{A}(d_mP_n^T)||^2+2\langle d_m^Td_m, W \rangle} \\
    t_n=&  \frac{\beta \langle \mathcal{A}(P_m d_n^T), \mathcal{A}(P_mP_n^T)-b\rangle +2 \langle P_n^Td_n,W\rangle}{\beta ||\mathcal{A}(P_m d_n^T)||^2+2\langle d_n^Td_n, W \rangle}
    \end{aligned}
\end{equation*}
Note that the step sizes can be calculated with $\mathcal{O}((m+n)r^2+r|\Omega|)$ computations.  Because solving $W$ to optimality is computationally inexpensive by comparison, we update $W$ in the same way as in Algorithm \ref{Agorthim:NoiseAltMin}.  The parameters $\beta$ and $\gamma$ are also updated in the previously mentioned way.

  \begin{algorithm}
 \caption{Alternating Steepest Descent with General Nonconvex Regularizer (GenASD)}
   \label{alg:GenASD}
 \begin{algorithmic}[1]
 \renewcommand{\algorithmicrequire}{\textbf{Input:}}
 \renewcommand{\algorithmicensure}{\textbf{Output:}}
 \REQUIRE $\mathcal{A}, b$
 \ENSURE Stationary point $X$ of (\ref{eqn:rectnoise})
 \\ \textit{Initialization} :$ P_n^0=\text{rand}(n,r)$, $W^0=I$.
  \FOR {$k = 1,..,$}
  \STATE $d_m^k={P}_m^{k-1} W+\beta \mathcal{A}^*(\mathcal{A}({P}_m^{k-1}{P^{k-1}_n}^T)-b){P^{k-1}_n}$ 
  \STATE $t_m^k=\frac{\beta \langle \mathcal{A}(d_mP_n^T), \mathcal{A}(P_mP_n^T)-b\rangle +2 \langle P_m^Td_m,W\rangle}{\beta ||\mathcal{A}(d_mP_n^T)||^2+2\langle d_m^Td_m, W \rangle}$
  \STATE  $P_m^k={P}_m^{k-1}-t_m^kd_m^k$ 
    \STATE $d_n^k={P}_n^{k-1} W+\beta \mathcal{A}^*(\mathcal{A}(P_m{{P}_n^{k-1}}^T)-b)^T P^k_m$
    \STATE $t^k_n= \frac{\beta \langle \mathcal{A}(P_m d_n^T), \mathcal{A}(P_mP_n^T)-b\rangle +2 \langle P_n^Td_n,W\rangle}{\beta ||\mathcal{A}(P_m d_n^T)||^2+2\langle d_n^Td_n, W \rangle} $
  \STATE  $P_n^k={P}_n^{k-1}-t_n^kd_n^k$ 

\STATE $[V^k, \Sigma^k]=\text{eig}(P_n^TP_n +P_m^TP_m)$
  \STATE $W^{k}=V^k \rho'( \Sigma^k) {V^K}^T$
  \STATE $\beta^{k+1}= \text{min}\big( 1.2\beta^k, \beta_{\text{max}})$, $\gamma^{k+1}= \text{max}\big( 0.8\gamma^k, \gamma_{\text{min}})$
\STATE Check for Convergence
  \ENDFOR
 \RETURN $X=P_mP_n^T$ 
 \end{algorithmic} 
 \end{algorithm}

\subsection{Convergence}
Each of the algorithms presented in this section is guaranteed to converge by the main result in \cite{xu_yin2012}.  Xu and Yin show convergence of coordinated block descent algorithms to solve nonconvex optimization problems of the following form: 
\begin{equation}
\begin{aligned}
& \underset{x \in \mathcal{X}}{\text{min }}
F(x_1, \ldots , x_s) \equiv f(x_1, \ldots, x_s)+ \sum_{i=1}^s s_i(x_i)\\
\end{aligned}
\end{equation}
Denote $$f_i^k(x_i)=f(x_1^m ,\ldots, x_{i-1}^k, x_i, x_{i+1}^{k-1}, \ldots x_s^{k-1})$$ and $$\mathcal{X}_i^k(x_i)=\mathcal{X}(x_1^m ,\ldots, x_{i-1}^k, x_i, x_{i+1}^{k-1}, \ldots x_s^{k-1}).$$  Xu and Yin analyze three types of updates:
\begin{align}
    x_i^k & = \underset{x_i \in \mathcal{X}_i^k}{\text{argmin }} f_i^k(x_i)+r_i(x_i) \label{eqn:standard_update}\\ 
    x_i^k & = \underset{x_i \in \mathcal{X}_i^k}{\text{argmin }} f_i^k(x_i)+\frac{L_i^{k-1}}{2} ||x_i^{k-1}-x_i^{k-2}||^2+r_i(x_i) \label{eqn:prox_update} \\ 
    x_i^k & =\underset{x_i \in \mathcal{X}_i^k}{\text{argmin }} \langle \nabla f_i^k(\hat{x}_i^{k-1}), x_i \rangle+\frac{L_i^{k-1}}{2} ||x_i-\hat{x}_i^{k-1}||^2+r_i(x_i)\label{eqn:prox_lin_update}
    \end{align}
where $\hat{x}_i^{k-1}={x}_i^{k-1}+ w^k (x_i^{k-1}-x_i^{k-2})$, and $w^k \geq 0$ is the extrapolation weight.  

The authors assume that $F$ is continuous, bounded, and has a minimizer.  Additionally, they make assumptions on $f_i^k$ depending on the type of update used.  For the standard update \eqref{eqn:standard_update}, $f_i^k$ must be strongly convex, and for the proximal linear update \eqref{eqn:prox_lin_update}, $\nabla f_i^k$ must be $L_i^k$-Lipshitz differentiable.  For the proximal update \eqref{eqn:prox_update}, no additional assumptions are made; $f_i^k$ need not even be convex. 

\ifodd\value{verbose}{
In each of the three algorithms presented in this section, the $W$ update is solved to optimality, and thus $G(W)$ is required to be strongly convex.  As shown in Lemma \ref{lemma:convex}, this is satisfied for any differentiable regularizer satisfying Assumption \ref{As:concave}.  Algorithm \ref{Agorthim:NoiseAltMin} utilizes the proximal update for $P$ due to the block non-convexity in $P$.}\else{
In both of the algorithms presented in this section, the $W$ update is solved to optimality, and thus $G(W)$ is required to be strongly convex.  As shown in Lemma \ref{lemma:convex}, this is satisfied for any differentiable regularizer satisfying Assumption \ref{As:concave}.
}\fi

In Algorithm \ref{Agorthim:NoiseAltMin}, we utilize the standard update, and so our objective function must be strongly convex.  Because the quadratic loss function is block convex in both $P_m$ and $P_n$, it typically samples a small portion of the matrix and will not be strongly convex.  However, the terms $\langle P_m^TP_m^T, W^k \rangle$ and $\langle P_n^TP_n^T, W^k \rangle$ are strongly convex so long as $W^k$ is full rank.  Assumption \ref{As:strict} is then necessary to ensure convergence, as strong concavity in $\rho$ ensures $\rho$ is strictly increasing and that that $0 \notin \partial \rho(x)$ for any finite $x$.  

Lastly, because $\nabla_{P_m} F$ and $\nabla_{P_n} F$ are linear, Algorithm \ref{alg:GenASD} converges. 

While the capped $l_1$ norm is non-differentiable, meaning none of the algorithms in this section are guaranteed to converge when using it as the regularizer, one can modify the algorithms slightly so that it does converge as in Shen and Mitchell \cite{shen_mitchell2018}.  The authors utilize the proximal linear update for $W$ as follows:

$$W^{k+1}=\underset{{ 0 \preceq W  \preceq I}}{\text{proj}}\bigg( W^k+ \frac{1}{L^k}(X^{k+1}+\gamma I) + w^k(W^k-W^{k-1})\bigg)$$
When this update is used in any of the algorithms in this section, convergence is guaranteed without assuming differentiablity of the regularizer.

\section{Numerical Results}
Algorithms \ref{Agorthim:NoiseAltMin} and \ref{alg:GenASD} were implemented in MATLAB R2018b, and the source code to run the algorithms and reproduce every result in this section is publicly available at \url{github.com/april1729/GenAltMin}.  The numerical experiments were conducted on a Dell Laptop running Windows 10 with 16 GB of ram and an Intel Core i3-4030U CPU @ 1.90 GHz.
\ifodd\value{verbose} {
\subsection{Euclidean Distance Geometry}
The Euclidean distance problem is, given an incomplete distance matrix between points in $\mathbb{R}^d$, where $D_{ij}$ is the distance squared between two points $i$ and $j$, we want to reconstruct the location of each point.  If the location of the points is given by the matrix $P\in \mathbb{R}^{n \times d}$, we hope to find a matrix $B=PP^T$ such that at each $(i,j)$ where $D_{ij}$ is known, $D_{ij}=B_{ii}+B_{jj}-2B_{ij}$.  We also know that the rank of $B$ is $d$, and so $B$ can be recovered by the optimization problem

\begin{equation}
\begin{aligned}
& \underset{B\in \mathcal{S}_+^n}{\text{min}}
& \text{rank}(B)\\
& \text{subject to}
& D_{ij}=B_{ii}+B_{jj}-2B_{ij} \forall (i,j) \in \Omega 
\end{aligned}
\end{equation}

The experiment is run on a data set consisting of 1000 randomly generated points on a sphere  with symmetric Gaussian noise added to each entry of the distance matrix independently. The reconstructions are shown in Figure \ref{fig:spheresWithNoise}.  Note that with 0.125\% of the data known, the reconstructed shapes for each of the nonconvex relaxations is in fact 2 dimensional, as the recovered Gram matrix was rank 2. Each of the nonconvex relaxations successfully recovers the sphere with at lower percentages of known data than the convex relaxation.  

Table \ref{table:errorNoise} shows the RFNE for the reconstructed points.  The error for each of the nonconvex relaxations is similar, and each of them out perform the nuclear norm. We note that the two regularizers that level off and are expected to give less bias towards smaller matrices, SCAD and the trace inverse regularizer, slightly outperform the LogDet and Schatten $p$ norm regularizers.

\begin{figure}
\centering
\begin{tabular}{cccccc}
                  & 0.125  & 0.25   & 0.5     & 1\%     & 2\%      \\ \hline
Nuclear Norm& \includegraphics[width=0.12\columnwidth]{spheresNoise/m1p1}&  \includegraphics[width=0.12\columnwidth]{spheresNoise/m1p2}& \includegraphics[width=0.12\columnwidth]{spheresNoise/m1p3} & \includegraphics[width=0.12\columnwidth]{spheresNoise/m1p4} & \includegraphics[width=0.12\columnwidth]{spheresNoise/m1p5}\\
SCAD & \includegraphics[width=0.12\columnwidth]{spheresNoise/m2p1}&  \includegraphics[width=0.12\columnwidth]{spheresNoise/m2p2}& \includegraphics[width=0.12\columnwidth]{spheresNoise/m2p3} & \includegraphics[width=0.12\columnwidth]{spheresNoise/m2p4} & \includegraphics[width=0.12\columnwidth]{spheresNoise/m2p5}\\
Trace Inverse& \includegraphics[width=0.12\columnwidth]{spheresNoise/m3p1}&  \includegraphics[width=0.12\columnwidth]{spheresNoise/m3p2}& \includegraphics[width=0.12\columnwidth]{spheresNoise/m3p3} & \includegraphics[width=0.12\columnwidth]{spheresNoise/m3p4} & \includegraphics[width=0.12\columnwidth]{spheresNoise/m3p5}\\
Log Det& \includegraphics[width=0.12\columnwidth]{spheresNoise/m4p1}&  \includegraphics[width=0.12\columnwidth]{spheresNoise/m4p2}& \includegraphics[width=0.12\columnwidth]{spheresNoise/m4p3} & \includegraphics[width=0.12\columnwidth]{spheresNoise/m4p4} & \includegraphics[width=0.12\columnwidth]{spheresNoise/m4p5}\\
Schatten 1/2 norm& \includegraphics[width=0.12\columnwidth]{spheresNoise/m5p1}&  \includegraphics[width=0.12\columnwidth]{spheresNoise/m5p2}& \includegraphics[width=0.12\columnwidth]{spheresNoise/m5p3} & \includegraphics[width=0.12\columnwidth]{spheresNoise/m5p4} & \includegraphics[width=0.12\columnwidth]{spheresNoise/m5p5}\\
\end{tabular}
\caption{Spheres reconstructed from noisy and incomplete distance measurements via Algorithm \ref{Agorthim:NoiseAltMin}}
\label{fig:spheresWithNoise}
\end{figure}

\begin{table} 
\caption{RFNEs for points of sphere reconstructed from incomplete and noisy distance measurements via Algorithm \ref{Agorthim:NoiseAltMin}}
\label{table:errorNoise}
\centering
\begin{tabular}{|l|l|l|l|l|l|}
\hline
                  & 0.125  & 0.25   & 0.5     & 1\%     & 2\%      \\ \hline
Nuclear Norm      & 0.7148 & 0.6182 & 0.3217  & 0.07563 & 0.01778  \\ \hline
SCAD              & 0.775  & 1.019  & 0.05256 & 0.01027 & 0.006512 \\ \hline
Trace Inverse     & 0.9783 & 0.7633 & 0.04556 & 0.01029 & 0.006518 \\ \hline
LogDet            & 0.8455 & 0.706  & 0.0646  & 0.01171 & 0.006745 \\ \hline
Schatten 1/2 norm & 0.7742 & 0.6656 & 0.07106 & 0.01265 & 0.006900 \\ \hline
\end{tabular}
\end{table}

\begin{table} 
\caption{Approximate rank of the Gram matrix recovered from Algorithm \ref{Agorthim:NoiseAltMin}, calculated as the number of eigenvalues larger than 0.001}
\label{table:rankNoise}
\centering
\begin{tabular}{|l|l|l|l|l|l|}
\hline
                  & 0.125 & 0.25 & 0.5 & 1\% & 2\% \\ \hline
Nuclear Norm      & 10    & 10   & 10  & 10  & 10  \\ \hline
SCAD              & 2     & 5    & 4   & 3   & 3   \\ \hline
Trace Inverse     & 2     & 3    & 3   & 3   & 3   \\ \hline
LogDet            & 2     & 3    & 3   & 3   & 3   \\ \hline
Schatten 1/2 norm & 2     & 3    & 3   & 3   & 3   \\ \hline
\end{tabular}
\end{table}

\begin{table} 
\caption{Values of the fourth largest eigenvalue of the recovered Gram matrix for various different relaxations and percentages of data.  The Gram matrix is expected to be rank 3.  }
\label{table:4theignoise}
\centering
\begin{tabular}{|l|l|l|l|l|l|}
\hline
                  & 0.125      & 0.25       & 0.5        & 1\%       & 2\%       \\ \hline
Nuclear Norm      & 56.49      & 72.63      & 39.50      & 7.287     & 1.543     \\ \hline
SCAD              & -3.231e-14 & 4.292      & 0.1133     & 8.642e-10 & 4.412e-09 \\ \hline
Trace Inverse     & -6.337e-14 & -4.829e-14 & -2.087e-13 & 3.058e-11 & 2.029e-10 \\ \hline
LogDet            & -4.021e-14 & -7.955e-14 & -1.075e-13 & 1.107e-11 & 6.671e-09 \\ \hline
Schatten 1/2 norm & -2.803e-14 & 9.684e-14  & -2.008e-13 & 4.600e-11 & 1.797e-08 \\ \hline
\end{tabular}
\end{table}
Tables \ref{table:rankNoise} and \ref{table:4theignoise} show the rank of the recovered Gram matrix and the fourth eigenvalue of the recovered Gram matrix.  Both tables show that for 0.25\% and 0.5\% of the data, SCAD fails to recover a rank 3 Gram matrix.  We conjecture that the lack of gradient information for large eigenvalues leads to the algorithm stopping at worse local minimum.  With this in mind, these results suggest that the trace inverse regularizer is the more robust of the regularizers with an upper bound.

Table \ref{table:4theignoise} shows the fourth eigenvalue of the Gram matrix recovered by the nuclear norm to be steadily decreasing, however even more data would be needed to exactly reconstruct the points. One could remedy this by adjusting the parameters to put more weight on the regularizer than the loss function, however, this would further ...?

We apply Algorithm \ref{Agorthim:NoiseAltMin} to the Euclidean distance geometry problem for reconstructing molecular conformation of 6 different proteins from CITATION.  We sample 5\% of the distances and add 20\% noise, and then utilize Algorithm 1 with with the trace inverse regularizer and the nuclear norm regularizers.  The results are shown in Table \ref{table:4theignoise}. In addition to being marginally faster in most cases, the trace inverse regularizer leads to more accurate results.
\begin{table} 
\caption{RFNE and run-time for Algorithm \ref{alg:GenAltMin} using the Trace Inverse regularizer and Nuclear Norm regularizer on six different protein structures.}
\centering
\begin{tabular}{l|ll|ll}
\label{table:protien}
Protein & TI Error  & TI Time  & Nuclear Norm Error & Nuclear Norm time \\ \hline
1AX8    & 0.0063209 & 2.3743   & 0.019039           & 3.6334            \\
1KDH    & 0.0019397 & 45.54    & 0.0030001          & 70.9233           \\
1HOE    & 0.0070767 & 0.75863  & 0.057482           & 1.0913            \\
1BPM    & 0.0014497 & 107.8874 & 0.0019605          & 151.9404          \\
1PTQ    & 0.030404  & 0.38344  & 0.11362            & 0.37811           \\
1RGS    & 0.0033729 & 14.9117  & 0.0044945          & 35.6796          
\end{tabular}
\end{table}

}\fi

\subsection{Synthetic Data for Rectangular Matrices }

We now test Algorithms \ref{Agorthim:NoiseAltMin} and \ref{alg:GenASD} utilizing synthetically generated low rank matrices with additive Gaussian noise.  Throughout this section, we generate a matrix of size $m$ by $n$ with rank $r$ and noise parameter $d$ by the following Matlab command: 
\begin{verbatim}
          M = randn(m,r) * randn(r,n) + d * randn(m,n)
\end{verbatim}

Figures \ref{fig:percents_small} and \ref{fig:percents_large} show the Relative Frobenius Norm Error (RFNE) of the solution recovered by the nuclear norm and by the trace inverse regularizer with varying percentages of known data, along with the relative Frobenius norm of the noise matrix as a baseline.  We plot these results for a 300 by 200 matrix and a 1000 by 500 matrix, each averaged over 10 randomly generated instances.  In both figures, the trace inverse is able to outperform the baseline when only 20\% of the data is available.  Note that in each case, the trace inverse regularizer outperforms the nuclear norm.

\begin{figure*}[t!]
    \centering
    \begin{subfigure}[b]{0.45\textwidth}
        \centering
        \includegraphics[width= \columnwidth]{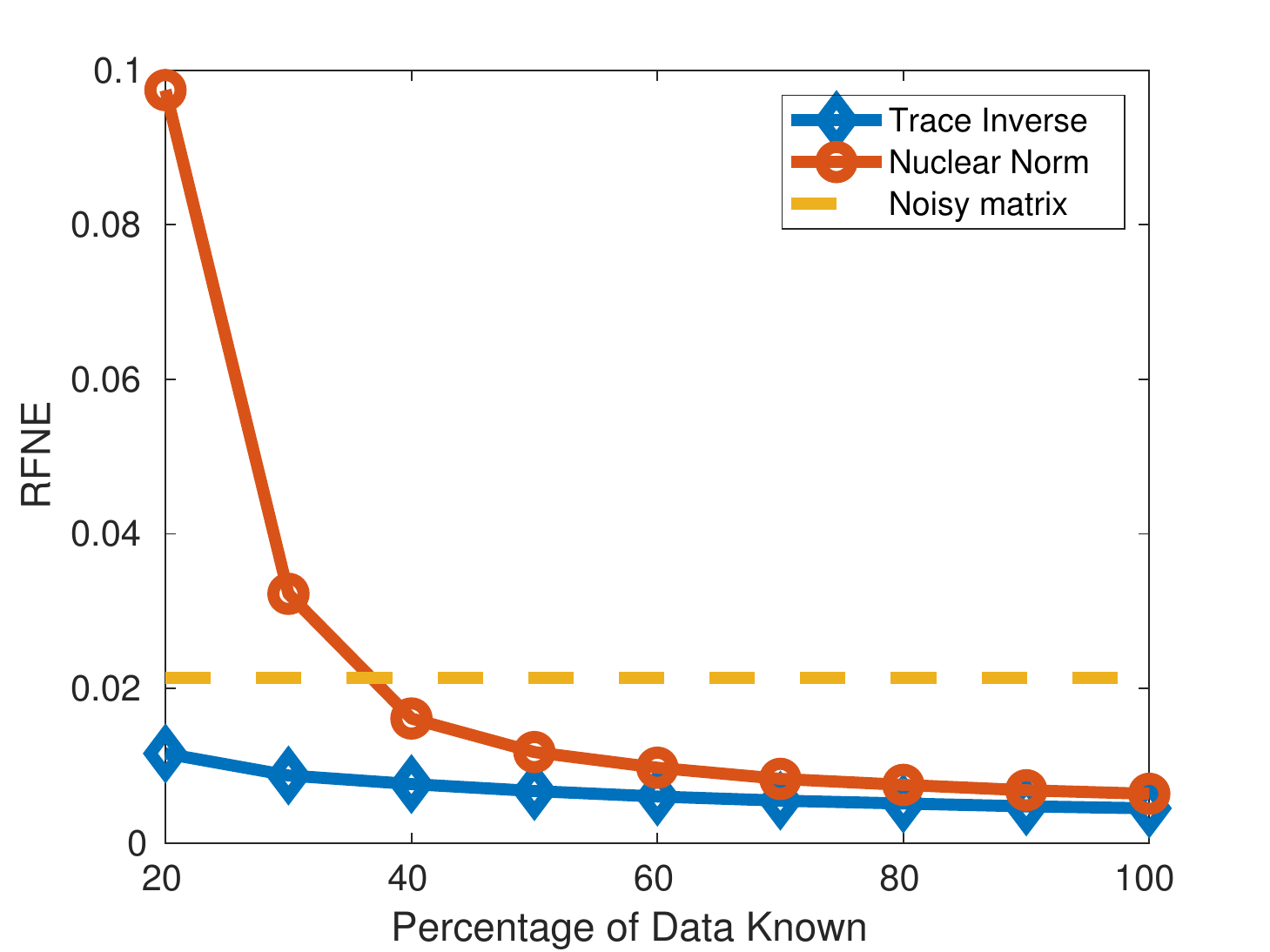}
        \caption{$m=300, n=200, r=5, d=0.05$}
                \label{fig:percents_small}

    \end{subfigure}%
    ~ 
    \begin{subfigure}[b]{0.45\textwidth}
        \centering
        \includegraphics[width= \columnwidth]{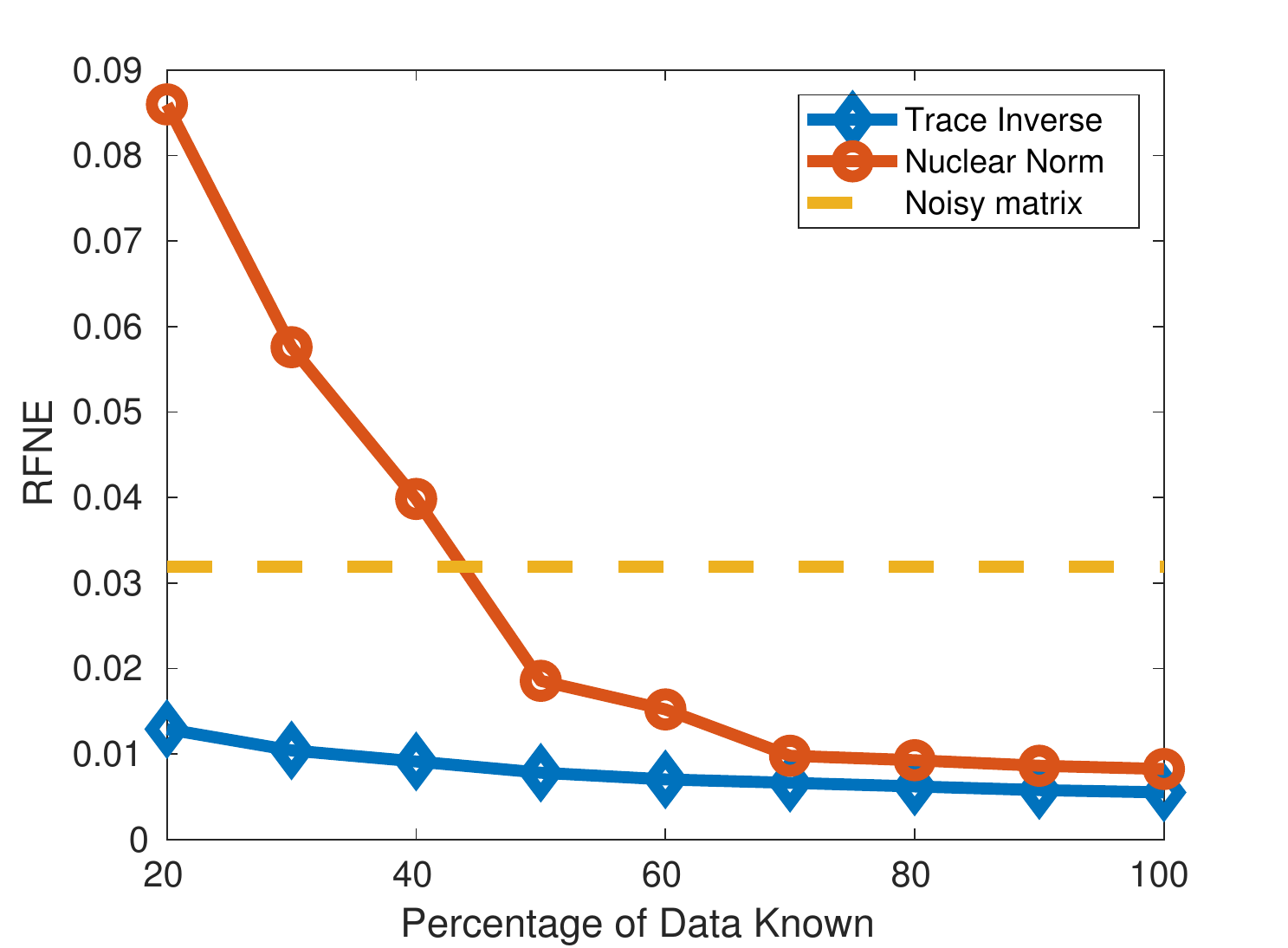}
        \caption{$m=1000, n=500, r=10, d=0.1$}
        \label{fig:percents_large}
    \end{subfigure}
    \caption{RFNE of the matrix recovered from Algorithm \ref{alg:GenASD} using both the nuclear norm and trace inverse regularizer for varying amounts of data known, along with the RFNE of the noise.}
\end{figure*}

To show that the superiority of the nonconvex regularizer is not just for certain choices of $\beta$, we show how each method performs for values of $\beta$ between $10^{-3}$ and 10 for the smaller problem and $10^{-4}$ and 1 for the larger problem in figures \ref{fig:beta_small} and \ref{fig:beta_large} respectively.  When the parameter is differed by an orders of magnitude, the results for the trace inverse regularizer are hardly affected, while the accuracy of the optimal solution to the nuclear norm problem varies a significant amount.  In fact, every value of $\beta$ for the trace inverse regularizer outperformed the optimal value of $\beta$ for the nuclear norm regularizer. 

\begin{figure*}[t!]
    \centering
    \begin{subfigure}[b]{0.45\textwidth}
        \centering
        \includegraphics[width= \columnwidth]{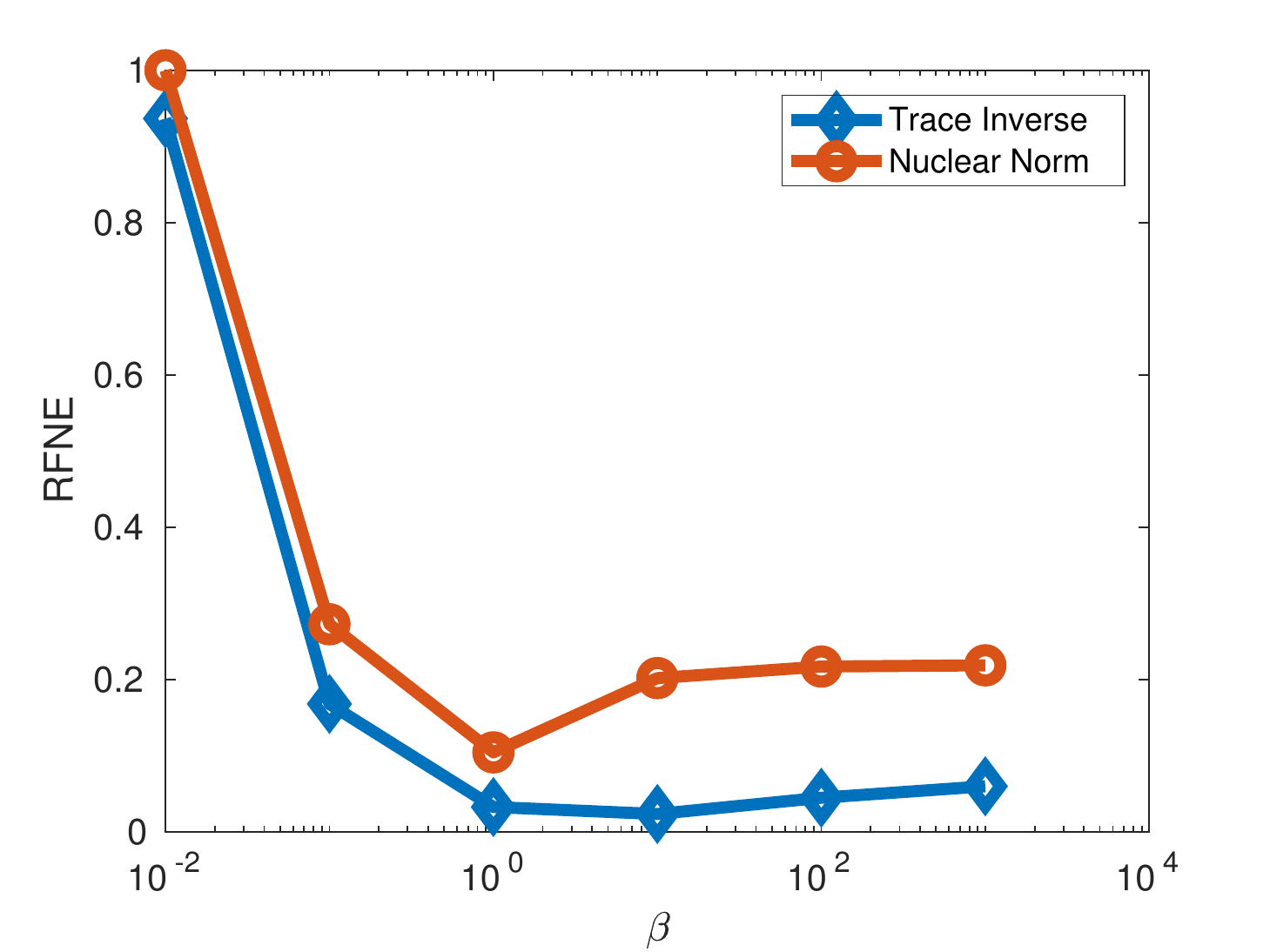}
        \caption{$m=300, n=200, r=5,$\\$ d=0.05, p=0.2$}
                \label{fig:beta_small}
    \end{subfigure}%
    ~ 
    \begin{subfigure}[b]{0.45\textwidth}
        \centering
        \includegraphics[width= \columnwidth]{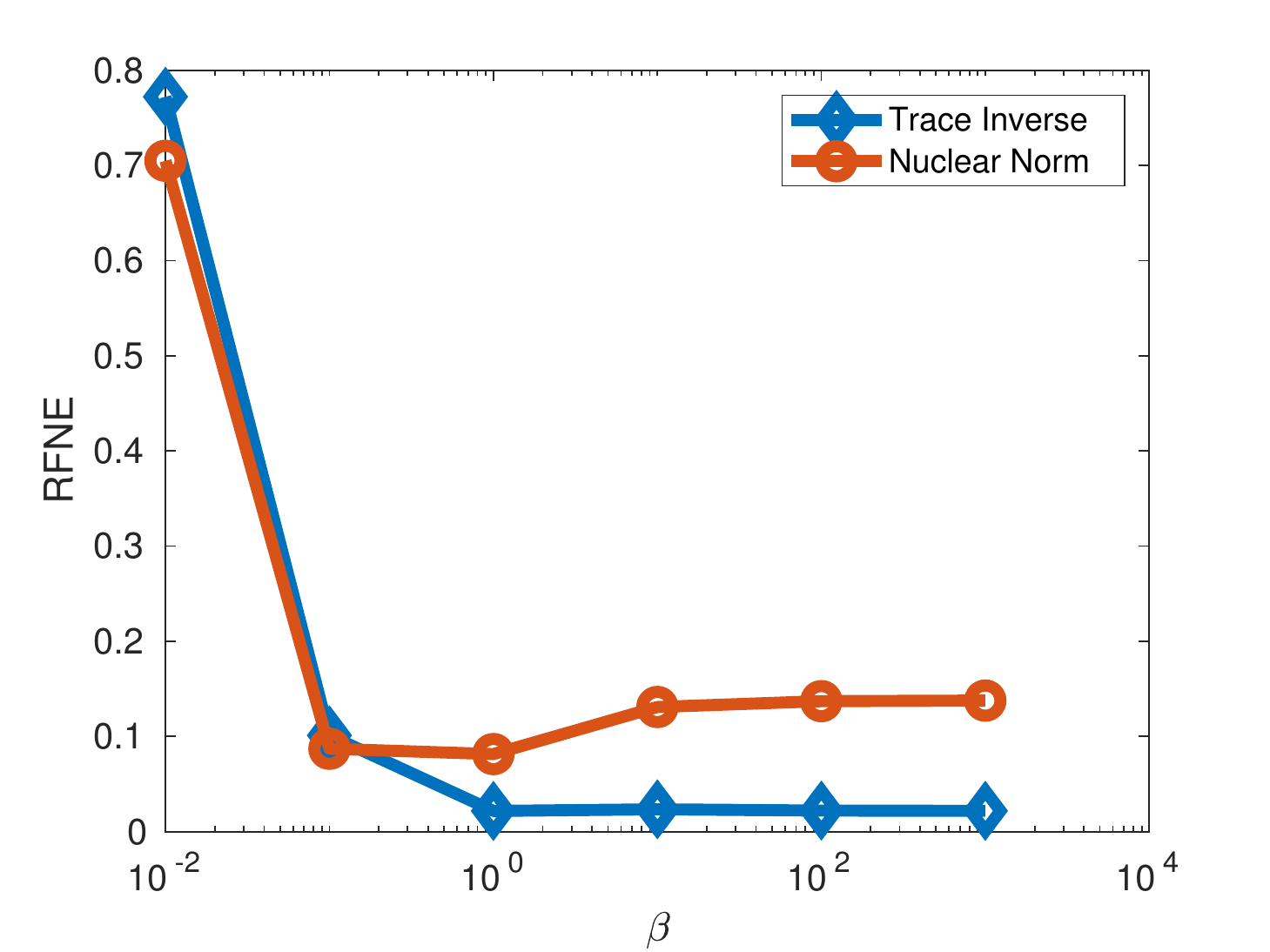}
        \caption{$m=1000, n=500, r=10,$\\$ d=0.1, p=0.2$}
        \label{fig:beta_large}
    \end{subfigure}
    \caption{RFNE for varying amounts of data known for Algorithm \ref{alg:GenASD} for both the nuclear norm and trace inverse regularizer, along with the RFNE of the noise.  The two figures show the results for different trade off parameters.  }
\end{figure*}

In order to illustrate the estimator bias of the nuclear norm formulation compared to nonconvex approaches, we plot the singular values of the reconstructed matrix utilizing both the trace inverse regularizer and the nuclear norm, along with the singular values of the original matrix.  We show this plot for varying values of $\beta$ of for a 300 by 200 matrix with rank 5 in Figure \ref{fig:svd_Small}\ifodd\value{verbose}{ and a 1000 by 500 matrix with rank 10 in Figure \ref{fig:svd_large}}\fi.  We plot the first $r$ singular values and the next $r$ singular values on a different scale, where $r$ is the rank of the matrix being recovered.  

\ifodd\value{verbose}{ We observe a similar trend in the singular values of the matrix reconstructed with the nuclear norm in both cases.  }\fi For values of $\beta$ that are smaller than 0.01, the solution is the zero matrix, and for values of $\beta$ larger than 0.1, the solution is not the correct rank.  As expected, there is a very small range in which we obtain a matrix with the correct rank.  Additionally, when the nuclear norm algorithm gives a matrix with the correct rank, the singular values reconstructed using the nuclear norm are noticeably smaller.  This is due to the fact that the nuclear norm puts equal weight on minimizing each singular value, including the ones that should not be zero. So, by increasing $\beta$, the singular values that are supposed to be zero become larger, and by decreasing $\beta$, the singular values that are not supposed to be zero become too small. 

By contrast, the top $r$ singular values for the matrix reconstructed with Algorithm $\ref{alg:GenASD}$ are approximately equal to the singular values of the original matrix.  For values of $\beta$ less than 0.01 in the first case and 0.001 in the second case, the solution to the trace inverse formulation is the correct rank.  As opposed to the convex relaxation, the nonconvex method has a sufficiently large range of $\beta$ that give a matrix of the correct rank.

\begin{figure}
    \centering
    \includegraphics[width=\columnwidth]{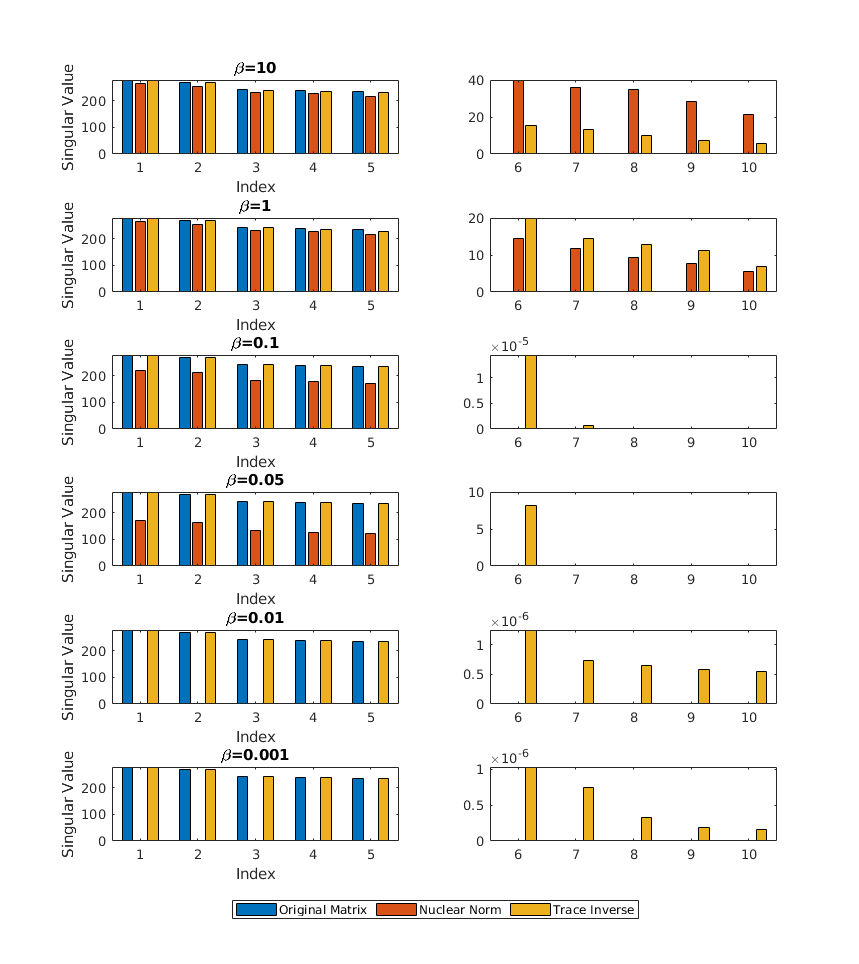}
    \caption{Singular value distribution for the matrices recovered utilizing Algorithm \ref{alg:GenASD} with the trace inverse regularizer and nuclear norm regularizer with $m=300 , n=200, r=5, d=0.05$, and $p=0.2$.}
    \label{fig:svd_Small}
\end{figure}

\ifodd\value{verbose}{
\begin{figure}
    \centering
    \includegraphics[width=\columnwidth]{svd_plot_large.png}
    \caption{Singular value distribution for the matrices recovered utilizing Algorithm \ref{alg:GenASD} with the trace inverse regularizer and nuclear norm regularizer with $m= 1000, n=500, r=10, d=0.1$, and $p=0.1$.}
    \label{fig:svd_large}
\end{figure}
}\fi

While this shows that the nonconvex formulations are significantly more robust to the choice of $\beta$, one may wonder if the added parameter controlling the curvature of the regularizer, $\gamma$, may contribute to more variability with parameter choices.  Figure \ref{fig:gamma} shows the RFNE for choices of $\gamma$ distributed between 0.03125 and 256.  Surprisingly, the figure shows that for a large range of choices of $\gamma$, the results are identical.  It is only at $\gamma=0.125$ that the nonconvex formulation loses the stability it usually has.  This behavior is expected due to the fact that the trace inverse regularizer converges to the rank function as $\gamma$ approaches 0.  For values of $\gamma$ larger than the smallest non-zero singular value of the original low rank matrix (roughly 200), the trace inverse formulation behaves more similarly to the nuclear norm, which one could also expect as the derivative of the nonconvex regularizer is approximately a constant for large values of $\gamma$.

\begin{figure*}[t!]
    \centering
    \begin{subfigure}[b]{0.45\textwidth}
        \centering
        \includegraphics[width= \columnwidth]{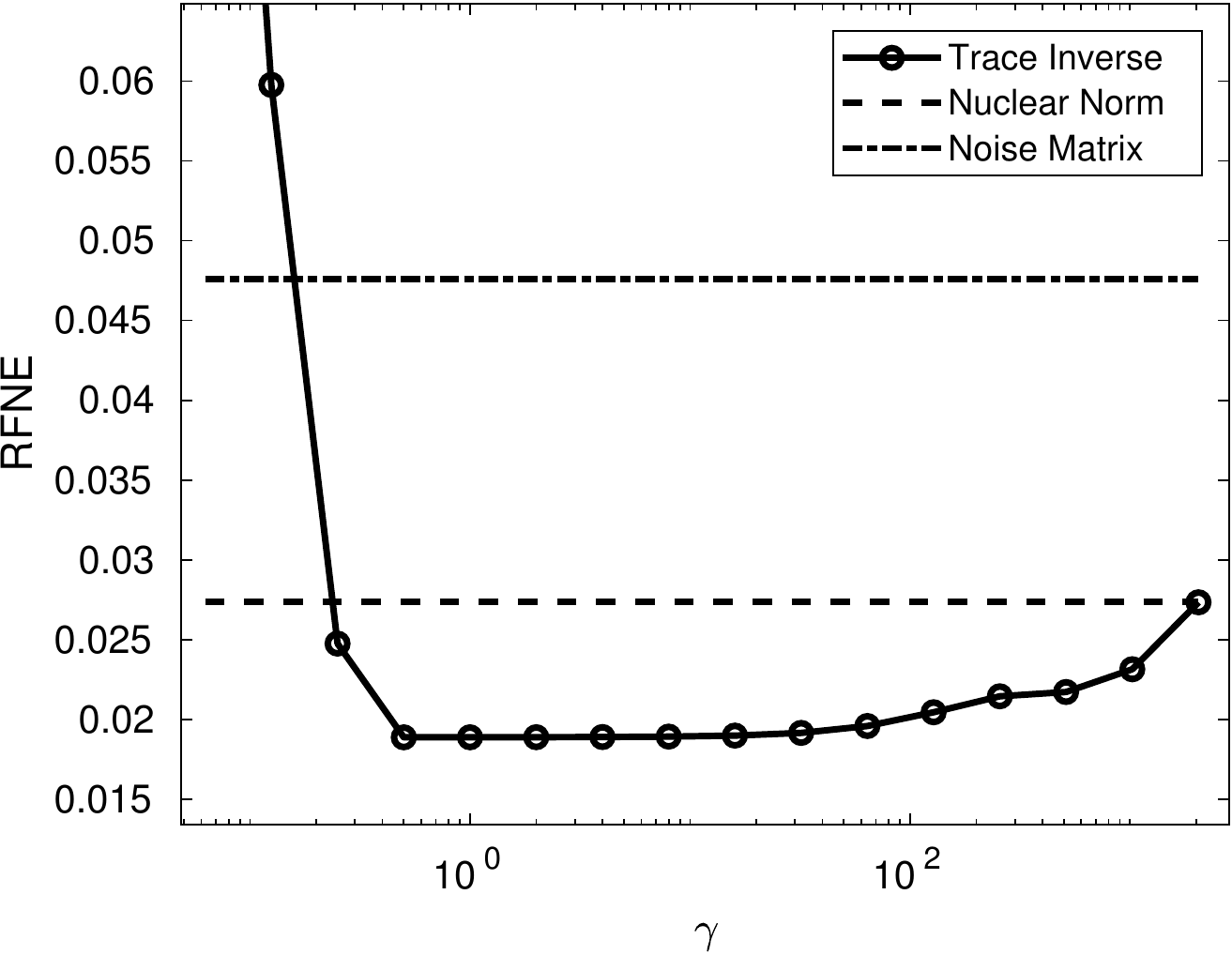}
        \caption{$m= 300, n= 200, r= 5$\\$ p=0.3 , d= 0.1$}
            \label{fig:gamma_smaller}
    \end{subfigure}%
    ~ 
    \begin{subfigure}[b]{0.45\textwidth}
        \centering
        \includegraphics[width= \columnwidth]{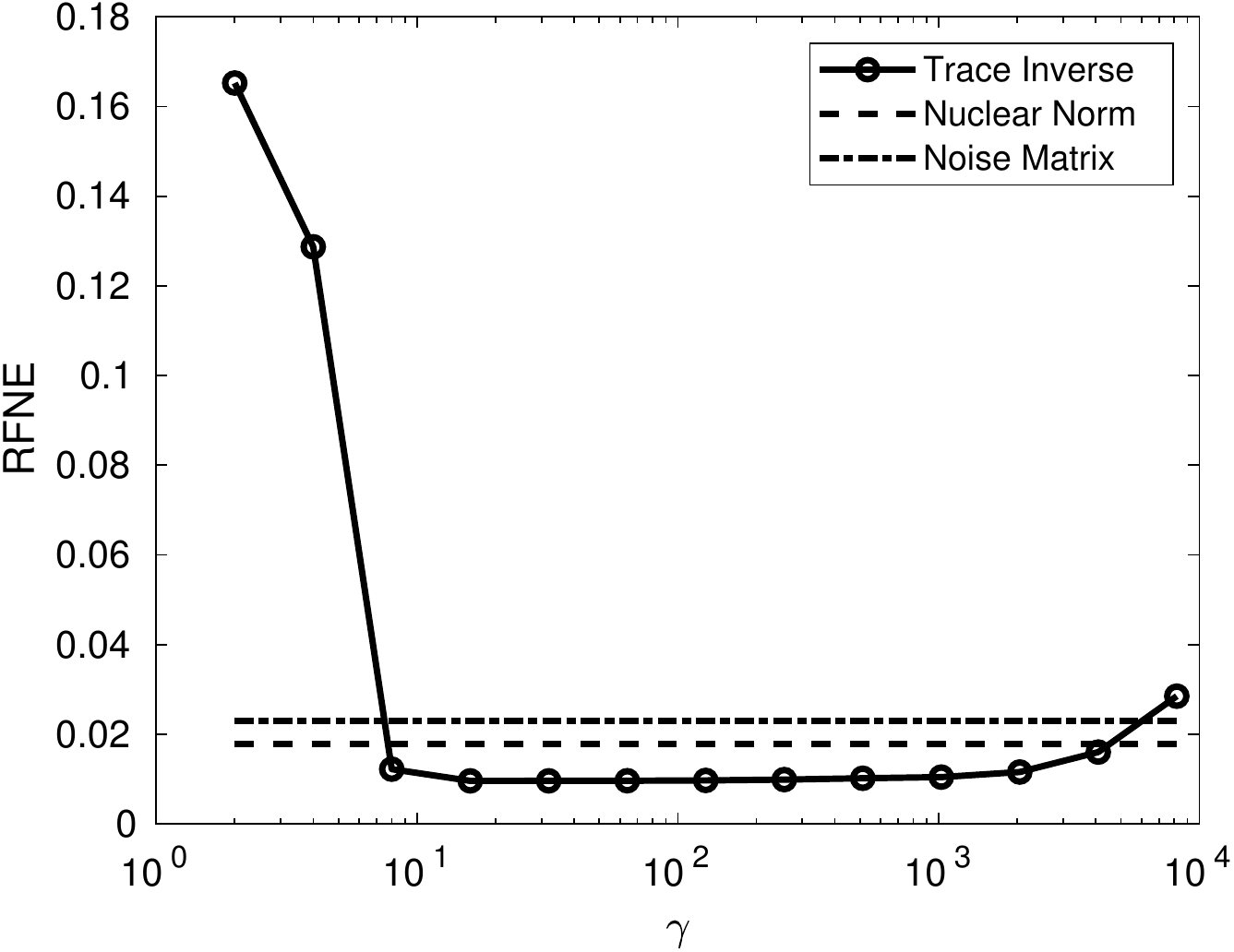}
        \caption{$m= 1000, n=500 , r= 5$\\$ p=0.1 , d= 0.05$}
            \label{fig:gamma_larger}
    \end{subfigure}
    \caption{RFNE of the matrix recovered by Algorithm \ref{alg:GenASD} utilizing the trace norm regularizer with values of $\gamma$ between $2^{-4}$ and $2^{8}$ in the left plot, and between $2^{1}$ and $2^{13}$ in the right plot, along with the RFNE of the noisy matrix and the optimal value to the nuclear norm minimization problem. }
        \label{fig:gamma}
\end{figure*}

Due to the remarkable consistency of the algorithm for varying choices of $\gamma$, parameter tuning is not an issue in practice.  Ideally, the choice of $\gamma$ would be approximately half of the largest nonzero singular value of the original low rank matrix so that the gradient of the regularizer is small for the top $r$ singular values.  While this quantity cannot be directly measured with incomplete noisy data, it can be (very roughly) approximated as follows:
$$\gamma= \frac{1}{2\sqrt{r p}} ||P_\Omega(\tilde{M})||_F $$
where $r$ is a rough estimate of the rank of the matrix. Note that, unlike rank constrained optimization methods which rely heavily on the rank of the matrix to be recovered being known exactly, Figure \ref{fig:gamma} indicates that our method will perform well even when the estimate of the rank is off by orders of magnitude.

\ifodd\value{verbose} {

Before moving on to larger, real data sets, we demonstrate the difference in speed between Algorithm \ref{alg:GenAltMin} when taking one step in the gradient direction and solving the $P_m$ and $P_n$ subproblems to optimality.  Figures \ref{fig:time_smaller} and \ref{fig:time_larger} plot the convergence of the two algorithms on matrices that are 300 by 200 and 1000 by 500 respectively.  First, note that in both figures the two methods converge to the same local optima, suggesting one need not worry about the difference in quality of the output between the two algorithms.

\begin{figure*}[t!]
    \centering
    \begin{subfigure}[b]{0.45\textwidth}
        \centering
        \includegraphics[width= \columnwidth]{300x200_all_algoriithms_loglog.png}
        \caption{$m=300 , n=200 , r=5$\\$ p=0.4 , d=0.05 $}
            \label{fig:time_smaller}
    \end{subfigure}%
    ~ 
    \begin{subfigure}[b]{0.45\textwidth}
        \centering
        \includegraphics[width= \columnwidth]{1000x500_all_algorithms_loglog.png}
        \caption{$m= 1000, n= 500, r= 15$\\$p=0.2 , d=0.01 $}
            \label{fig:time_larger}
    \end{subfigure}
    \caption{Convergence of Algorithm \ref{} (AM), Algorithm \ref{alg:GenAltMin} (SOR), and Algorithm \ref{alg:GenASD} (ASD). The RFNE and cumulative runtime is recorded at each iteration.}
\end{figure*}
}\else{

Before moving on to larger, real data sets, we demonstrate the difference in speed between Algorithm \ref{Agorthim:NoiseAltMin}  and Algorithm \ref{alg:GenASD}.  Figures \ref{fig:time_smaller} and \ref{fig:time_larger} plot the convergence of the two algorithms on matrices that are 300 by 200 and 1000 by 500 respectively.  First, note that in both figures the two methods converge to the same local optima, suggesting one need not worry about the difference in quality of the output between the two algorithms.

\begin{figure*}[t!]
    \centering
    \begin{subfigure}[b]{0.45\textwidth}
        \centering
        \includegraphics[width= \columnwidth]{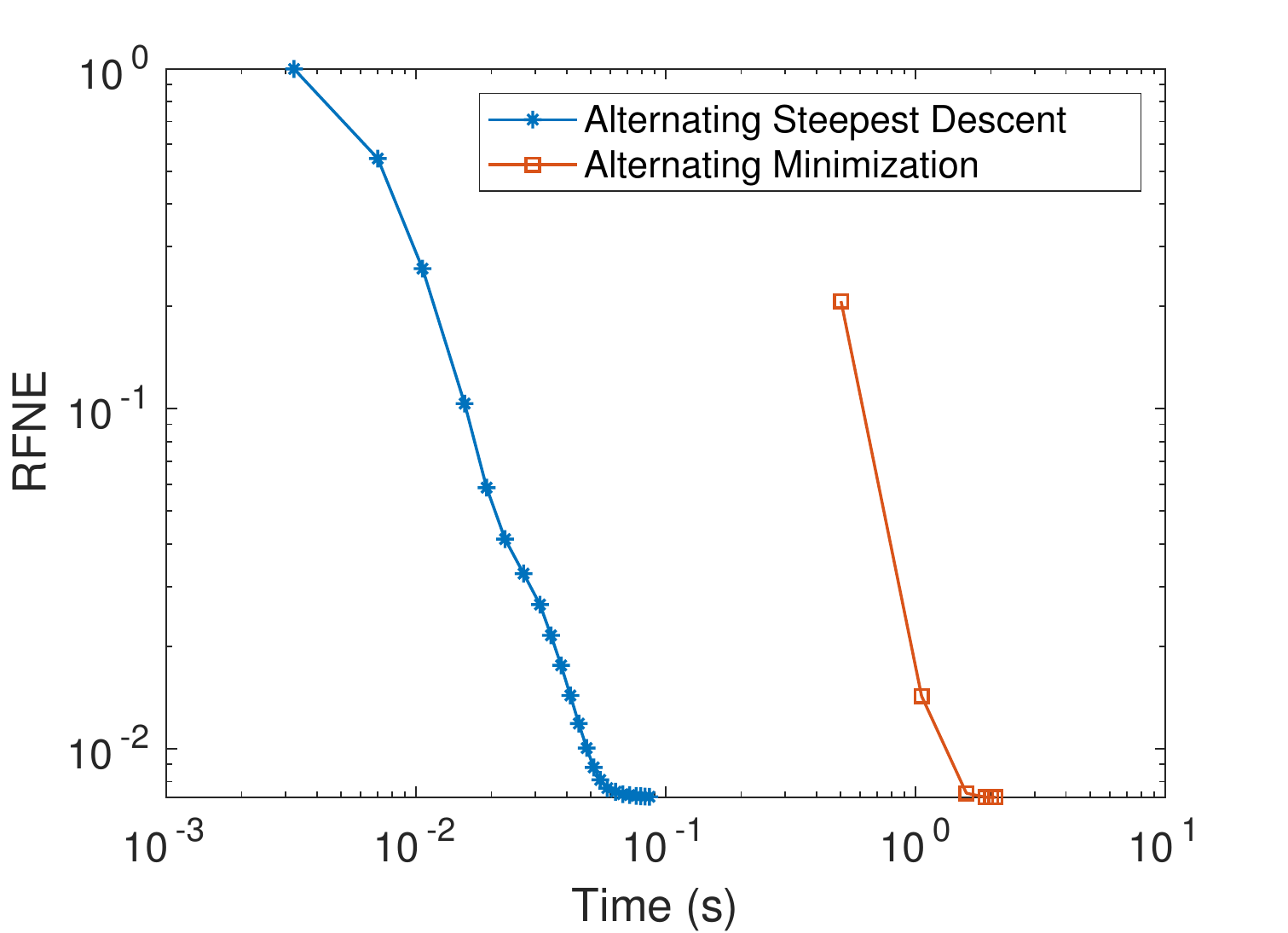}
        \caption{$m=300 , n=200 , r=5 $\\ $ p=0.4 , d=0.05 $}
            \label{fig:time_smaller}
    \end{subfigure}%
    ~ 
    \begin{subfigure}[b]{0.45\textwidth}
        \centering
        \includegraphics[width= \columnwidth]{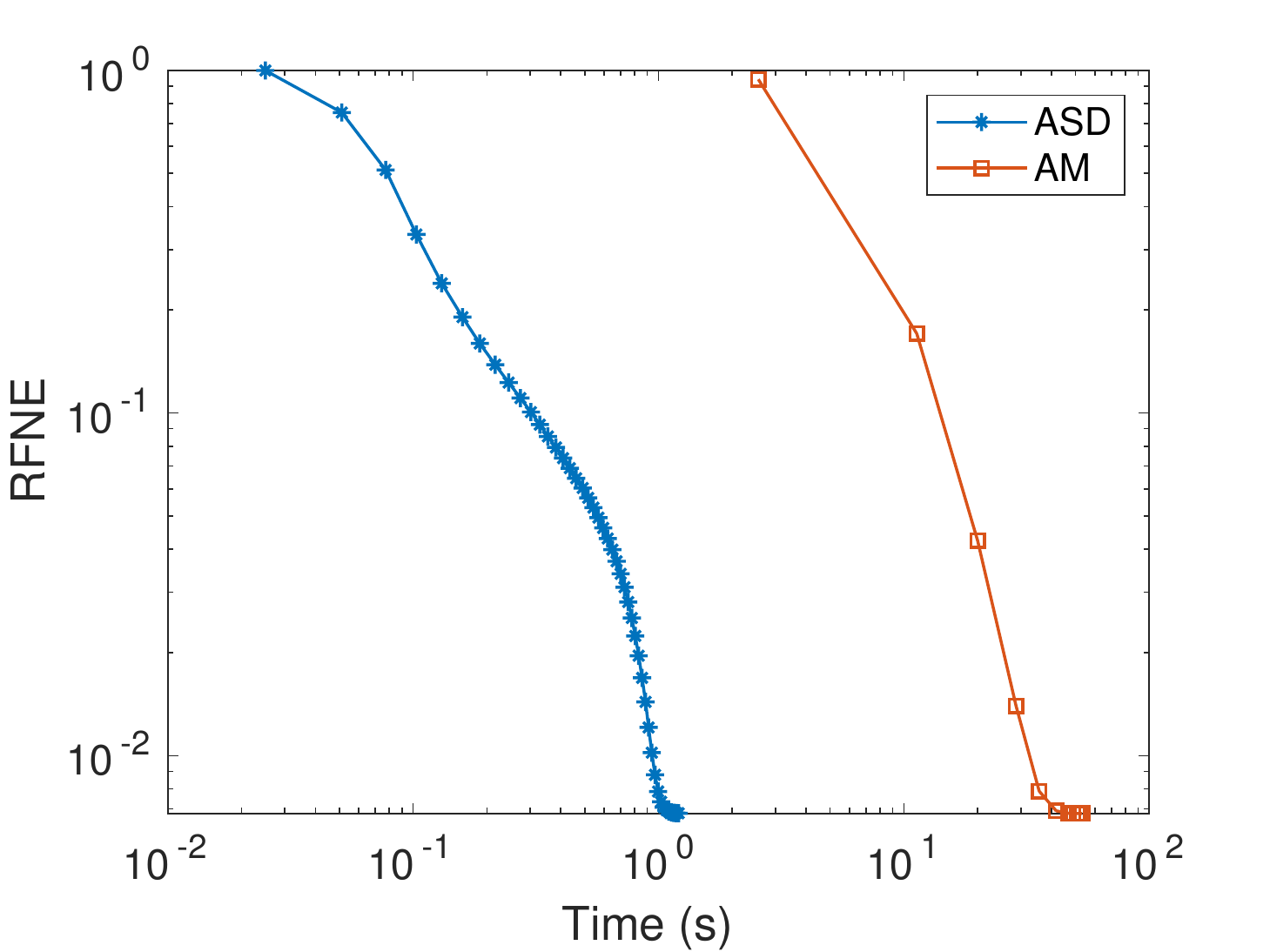}
        \caption{$m= 1000, n= 500, r= 15$\\$ p=0.2 , d=0.01 $}
            \label{fig:time_larger}
    \end{subfigure}
    \caption{Convergence of Algorithm \ref{Agorthim:NoiseAltMin} and Algorithm \ref{alg:GenASD}. The RFNE and cumulative runtime is recorded at each iteration.}
\end{figure*}

}\fi

For the smaller case, while clear that taking only one step converges faster than solving the subproblems to optimality, they both converge in under 2 seconds.  When solving the subproblems to optimality, however, only 4 iterations are needed to converge. In the larger case, the difference is much more apparent.  GenASD still converges in less than half of a second, where as solving the subproblems to optimality takes about 17 seconds.

We compare our algorithm to three other common matrix completion algorithms.  The algorithm presented by Yao et.\ al.\ \cite{yao_kwok2015}, Fast Nonconvex Low-Rank Matrix Learning (FaNCL), is the only other work we know of that solves \eqref{eqn:generalizedRelaxation} with iterations having computational complexity $\mathcal{O}(r|\Omega|)$.  The authors utilize nonconvex regularizers similar to the ones discussed in this paper, and use singular value thresholding with iteratively reweighted thresholds.  \revision{The FaNCL algorithm was later improved upon in \cite{yao_faster} by incorporating a momentum term for faster convergence.  We only compare to the earlier work as that was the code we had available.}

We also compare to FPC, which solved the nuclear norm minimization problem \cite{Ma2009}, and LMaFit, which solves the rank constrained problem \cite{Wen2012}.  Because LMaFit requires an estimate of the rank, we show results when the algorithm is given the correct rank and a rank twice as large as the original matrix to demonstrate the advantage of a rank minimization approach.
\begin{table} 
\caption{Comparison of four different matrix completion algorithms on randomly generated low rank matrices.  The algorithm LMaFit reconstructs a matrix of a given rank $k$.  The table shows the results when the algorithm is given the exact rank($k=r$) and an incorrect rank ($k=2r$). }
\centering
\scalebox{0.85}{\begin{tabular}{ccclllllll}
{\color[HTML]{000000} }                        & {\color[HTML]{000000} }                           & \multicolumn{1}{c|}{{\color[HTML]{000000} }}     & \multicolumn{2}{c|}{{\color[HTML]{000000} GenASD}}                                                                                                        & \multicolumn{2}{c|}{{\color[HTML]{000000} FaNCL}}                                                                                                                                                      & \multicolumn{1}{c|}{{\color[HTML]{000000} FPC}}    & \multicolumn{2}{c|}{{\color[HTML]{000000} LMaFit}}                                                   \\
\multicolumn{1}{c|}{{\color[HTML]{000000} r}}  & \multicolumn{1}{c|}{{\color[HTML]{000000} noise}} & \multicolumn{1}{c|}{{\color[HTML]{000000} p}}    & \multicolumn{1}{c|}{{\color[HTML]{000000} \begin{tabular}[c]{@{}c@{}}Trace \\ Inverse\end{tabular}}} & \multicolumn{1}{c|}{{\color[HTML]{000000} SCAD}}   & \multicolumn{1}{c|}{{\color[HTML]{000000} \begin{tabular}[c]{@{}c@{}}Log \\ Sum\end{tabular}}} & \multicolumn{1}{c|}{{\color[HTML]{000000} \begin{tabular}[c]{@{}c@{}}Capped \\ L1 Norm\end{tabular}}} & \multicolumn{1}{c|}{{\color[HTML]{000000} }}       & \multicolumn{1}{c|}{{\color[HTML]{000000} k=r}}    & \multicolumn{1}{c}{{\color[HTML]{000000} k=2r}} \\ \hline
\multicolumn{10}{c}{{\color[HTML]{000000} m=300, n=200}}                                                                                                                                                                                                                                                                                                                                                                                                                                                                                                                                                                                                                               \\ \hline
\multicolumn{1}{c|}{{\color[HTML]{000000} 5}}  & \multicolumn{1}{c|}{{\color[HTML]{000000} 0.05}}  & \multicolumn{1}{c|}{{\color[HTML]{000000} 0.1}}  & \multicolumn{1}{l|}{{\color[HTML]{000000} 0.0234}}                                                   & \multicolumn{1}{l|}{{\color[HTML]{000000} 0.0232}} & \multicolumn{1}{l|}{{\color[HTML]{000000} 0.0994}}                                             & \multicolumn{1}{l|}{{\color[HTML]{000000} 0.0546}}                                                    & \multicolumn{1}{l|}{{\color[HTML]{000000} 0.2374}} & \multicolumn{1}{l|}{{\color[HTML]{000000} 0.0273}} & {\color[HTML]{000000} 0.2892}                   \\ \hline
\multicolumn{1}{c|}{{\color[HTML]{000000} 5}}  & \multicolumn{1}{c|}{{\color[HTML]{000000} 0.05}}  & \multicolumn{1}{c|}{{\color[HTML]{000000} 0.3}}  & \multicolumn{1}{l|}{{\color[HTML]{000000} 0.0089}}                                                   & \multicolumn{1}{l|}{{\color[HTML]{000000} 0.0089}} & \multicolumn{1}{l|}{{\color[HTML]{000000} 0.0128}}                                             & \multicolumn{1}{l|}{{\color[HTML]{000000} 0.0092}}                                                    & \multicolumn{1}{l|}{{\color[HTML]{000000} 0.0171}} & \multicolumn{1}{l|}{{\color[HTML]{000000} 0.0089}} & {\color[HTML]{000000} 0.1035}                   \\ \hline
\multicolumn{1}{c|}{{\color[HTML]{000000} 5}}  & \multicolumn{1}{c|}{{\color[HTML]{000000} 0.1}}   & \multicolumn{1}{c|}{{\color[HTML]{000000} 0.1}}  & \multicolumn{1}{l|}{{\color[HTML]{000000} 0.0399}}                                                   & \multicolumn{1}{l|}{{\color[HTML]{000000} 0.0402}} & \multicolumn{1}{l|}{{\color[HTML]{000000} 0.0906}}                                             & \multicolumn{1}{l|}{{\color[HTML]{000000} 0.0476}}                                                    & \multicolumn{1}{l|}{{\color[HTML]{000000} 0.2573}} & \multicolumn{1}{l|}{{\color[HTML]{000000} 0.3616}} & {\color[HTML]{000000} 0.3027}                   \\ \hline
\multicolumn{1}{c|}{{\color[HTML]{000000} 5}}  & \multicolumn{1}{c|}{{\color[HTML]{000000} 0.1}}   & \multicolumn{1}{c|}{{\color[HTML]{000000} 0.3}}  & \multicolumn{1}{l|}{{\color[HTML]{000000} 0.018}}                                                    & \multicolumn{1}{l|}{{\color[HTML]{000000} 0.018}}  & \multicolumn{1}{l|}{{\color[HTML]{000000} 0.0203}}                                             & \multicolumn{1}{l|}{{\color[HTML]{000000} 0.0181}}                                                    & \multicolumn{1}{l|}{{\color[HTML]{000000} 0.0334}} & \multicolumn{1}{l|}{{\color[HTML]{000000} 0.018}}  & {\color[HTML]{000000} 0.1039}                   \\ \hline
\multicolumn{1}{c|}{{\color[HTML]{000000} 10}} & \multicolumn{1}{c|}{{\color[HTML]{000000} 0.05}}  & \multicolumn{1}{c|}{{\color[HTML]{000000} 0.1}}  & \multicolumn{1}{l|}{{\color[HTML]{000000} 0.7321}}                                                   & \multicolumn{1}{l|}{{\color[HTML]{000000} 0.0476}} & \multicolumn{1}{l|}{{\color[HTML]{000000} 0.2853}}                                             & \multicolumn{1}{l|}{{\color[HTML]{000000} 0.1742}}                                                    & \multicolumn{1}{l|}{{\color[HTML]{000000} 0.6683}} & \multicolumn{1}{l|}{{\color[HTML]{000000} 1.1706}} & {\color[HTML]{000000} 0.8913}                   \\ \hline
\multicolumn{1}{c|}{{\color[HTML]{000000} 10}} & \multicolumn{1}{c|}{{\color[HTML]{000000} 0.05}}  & \multicolumn{1}{c|}{{\color[HTML]{000000} 0.3}}  & \multicolumn{1}{l|}{{\color[HTML]{000000} 0.0094}}                                                   & \multicolumn{1}{l|}{{\color[HTML]{000000} 0.0093}} & \multicolumn{1}{l|}{{\color[HTML]{000000} 0.0156}}                                             & \multicolumn{1}{l|}{{\color[HTML]{000000} 0.0098}}                                                    & \multicolumn{1}{l|}{{\color[HTML]{000000} 0.0202}} & \multicolumn{1}{l|}{{\color[HTML]{000000} 0.0093}} & {\color[HTML]{000000} 0.1267}                   \\ \hline
\multicolumn{1}{c|}{{\color[HTML]{000000} 10}} & \multicolumn{1}{c|}{{\color[HTML]{000000} 0.1}}   & \multicolumn{1}{c|}{{\color[HTML]{000000} 0.1}}  & \multicolumn{1}{l|}{{\color[HTML]{000000} 0.7726}}                                                   & \multicolumn{1}{l|}{{\color[HTML]{000000} 0.091}}  & \multicolumn{1}{l|}{{\color[HTML]{000000} 0.3515}}                                             & \multicolumn{1}{l|}{{\color[HTML]{000000} 0.1942}}                                                    & \multicolumn{1}{l|}{{\color[HTML]{000000} 0.6349}} & \multicolumn{1}{l|}{{\color[HTML]{000000} 0.8075}} & {\color[HTML]{000000} 0.8429}                   \\ \hline
\multicolumn{1}{c|}{{\color[HTML]{000000} 10}} & \multicolumn{1}{c|}{{\color[HTML]{000000} 0.1}}   & \multicolumn{1}{c|}{{\color[HTML]{000000} 0.3}}  & \multicolumn{1}{l|}{{\color[HTML]{000000} 0.0193}}                                                   & \multicolumn{1}{l|}{{\color[HTML]{000000} 0.0193}} & \multicolumn{1}{l|}{{\color[HTML]{000000} 0.0233}}                                             & \multicolumn{1}{l|}{{\color[HTML]{000000} 0.0195}}                                                    & \multicolumn{1}{l|}{{\color[HTML]{000000} 0.0428}} & \multicolumn{1}{l|}{{\color[HTML]{000000} 0.0193}} & {\color[HTML]{000000} 0.1679}                   \\ \hline
\multicolumn{10}{c}{{\color[HTML]{000000} m=1000, n=500}}                                                                                                                                                                                                                                                                                                                                                                                                                                                                                                                                                                                                                              \\ \hline
\multicolumn{1}{c|}{{\color[HTML]{000000} 5}}  & \multicolumn{1}{c|}{{\color[HTML]{000000} 0.1}}   & \multicolumn{1}{c|}{{\color[HTML]{000000} 0.05}} & \multicolumn{1}{l|}{{\color[HTML]{000000} 0.031}}                                                    & \multicolumn{1}{l|}{{\color[HTML]{000000} 0.0311}} & \multicolumn{1}{l|}{{\color[HTML]{000000} 0.0493}}                                             & \multicolumn{1}{l|}{{\color[HTML]{000000} 0.039}}                                                     & \multicolumn{1}{l|}{{\color[HTML]{000000} 0.1436}} & \multicolumn{1}{l|}{{\color[HTML]{000000} 0.0314}} & {\color[HTML]{000000} 0.2074}                   \\ \hline
\multicolumn{1}{c|}{{\color[HTML]{000000} 5}}  & \multicolumn{1}{c|}{{\color[HTML]{000000} 0.1}}   & \multicolumn{1}{c|}{{\color[HTML]{000000} 0.1}}  & \multicolumn{1}{l|}{{\color[HTML]{000000} 0.0188}}                                                   & \multicolumn{1}{l|}{{\color[HTML]{000000} 0.0188}} & \multicolumn{1}{l|}{{\color[HTML]{000000} 0.023}}                                              & \multicolumn{1}{l|}{{\color[HTML]{000000} 0.0197}}                                                    & \multicolumn{1}{l|}{{\color[HTML]{000000} 0.0484}} & \multicolumn{1}{l|}{{\color[HTML]{000000} 0.0188}} & {\color[HTML]{000000} 0.1352}                   \\ \hline
\multicolumn{1}{c|}{{\color[HTML]{000000} 5}}  & \multicolumn{1}{c|}{{\color[HTML]{000000} 0.3}}   & \multicolumn{1}{c|}{{\color[HTML]{000000} 0.05}} & \multicolumn{1}{l|}{{\color[HTML]{000000} 0.1723}}                                                   & \multicolumn{1}{l|}{{\color[HTML]{000000} 0.0947}} & \multicolumn{1}{l|}{{\color[HTML]{000000} 0.1503}}                                             & \multicolumn{1}{l|}{{\color[HTML]{000000} 0.1216}}                                                    & \multicolumn{1}{l|}{{\color[HTML]{000000} 0.3023}} & \multicolumn{1}{l|}{{\color[HTML]{000000} 0.0943}} & {\color[HTML]{000000} 0.2946}                   \\ \hline
\multicolumn{1}{c|}{{\color[HTML]{000000} 5}}  & \multicolumn{1}{c|}{{\color[HTML]{000000} 0.3}}   & \multicolumn{1}{c|}{{\color[HTML]{000000} 0.1}}  & \multicolumn{1}{l|}{{\color[HTML]{000000} 0.0994}}                                                   & \multicolumn{1}{l|}{{\color[HTML]{000000} 0.0582}} & \multicolumn{1}{l|}{{\color[HTML]{000000} 0.0894}}                                             & \multicolumn{1}{l|}{{\color[HTML]{000000} 0.057}}                                                     & \multicolumn{1}{l|}{{\color[HTML]{000000} 0.1061}} & \multicolumn{1}{l|}{{\color[HTML]{000000} 0.0566}} & {\color[HTML]{000000} 0.1333}                   \\ \hline
\multicolumn{1}{c|}{{\color[HTML]{000000} 10}} & \multicolumn{1}{c|}{{\color[HTML]{000000} 0.1}}   & \multicolumn{1}{c|}{{\color[HTML]{000000} 0.05}} & \multicolumn{1}{l|}{{\color[HTML]{000000} 0.8952}}                                                   & \multicolumn{1}{l|}{{\color[HTML]{000000} 0.0424}} & \multicolumn{1}{l|}{{\color[HTML]{000000} 0.4071}}                                             & \multicolumn{1}{l|}{{\color[HTML]{000000} 0.0795}}                                                    & \multicolumn{1}{l|}{{\color[HTML]{000000} 0.5504}} & \multicolumn{1}{l|}{{\color[HTML]{000000} 0.0475}} & {\color[HTML]{000000} 0.6989}                   \\ \hline
\multicolumn{1}{c|}{{\color[HTML]{000000} 10}} & \multicolumn{1}{c|}{{\color[HTML]{000000} 0.1}}   & \multicolumn{1}{c|}{{\color[HTML]{000000} 0.1}}  & \multicolumn{1}{l|}{{\color[HTML]{000000} 0.0207}}                                                   & \multicolumn{1}{l|}{{\color[HTML]{000000} 0.0207}} & \multicolumn{1}{l|}{{\color[HTML]{000000} 0.0273}}                                             & \multicolumn{1}{l|}{{\color[HTML]{000000} 0.0222}}                                                    & \multicolumn{1}{l|}{{\color[HTML]{000000} 0.0684}} & \multicolumn{1}{l|}{{\color[HTML]{000000} 0.0208}} & {\color[HTML]{000000} 0.1612}                   \\ \hline
\multicolumn{1}{c|}{{\color[HTML]{000000} 10}} & \multicolumn{1}{c|}{{\color[HTML]{000000} 0.3}}   & \multicolumn{1}{c|}{{\color[HTML]{000000} 0.05}} & \multicolumn{1}{l|}{{\color[HTML]{000000} 0.8675}}                                                   & \multicolumn{1}{l|}{{\color[HTML]{000000} 0.1231}} & \multicolumn{1}{l|}{{\color[HTML]{000000} 0.5028}}                                             & \multicolumn{1}{l|}{{\color[HTML]{000000} 0.1544}}                                                    & \multicolumn{1}{l|}{{\color[HTML]{000000} 0.5626}} & \multicolumn{1}{l|}{{\color[HTML]{000000} 0.1258}} & {\color[HTML]{000000} 0.7173}                   \\ \hline
\multicolumn{1}{c|}{{\color[HTML]{000000} 10}} & \multicolumn{1}{c|}{{\color[HTML]{000000} 0.3}}   & \multicolumn{1}{c|}{{\color[HTML]{000000} 0.1}}  & \multicolumn{1}{l|}{{\color[HTML]{000000} 0.1139}}                                                   & \multicolumn{1}{l|}{{\color[HTML]{000000} 0.0623}} & \multicolumn{1}{l|}{{\color[HTML]{000000} 0.1013}}                                             & \multicolumn{1}{l|}{{\color[HTML]{000000} 0.063}}                                                     & \multicolumn{1}{l|}{{\color[HTML]{000000} 0.1563}} & \multicolumn{1}{l|}{{\color[HTML]{000000} 0.0622}} & {\color[HTML]{000000} 0.2003}                  
\end{tabular}
}
\end{table}

With minor exceptions, the algorithm presented in this paper, FaNCL, and LMaFit when given the correct rank all give approximately the same quality result.  GenAltMin solves the problem faster than FaNCL in every case. Although GenAltMin and FaNCL take approximately the same amount of time per iteration, singular value thresholding methods take significantly more iterations. Our algorithm outperforms FPC for reasons discussed earlier in this section, and also LMaFit when the rank is not well known.  

\subsection{Collaborative Filtering}
Perhaps the most widely known application of rank minimization is the Netflix Problem, wherein the goal is to predict how a user would rate a movie based on how she rated other movies, along with how other users with similar taste rated said movie. To formulate this as a matrix completion problem, we have a sparse matrix whose columns correspond to different movies and whose rows correspond to different users, with the entries of the matrix being how a user rated a specific movie. We expect that if every entry of this matrix was observed, the matrix would be low rank because the number of factors contributing to how much someone enjoys a movie is far less than the total number of movies or users in the data set.

\begin{table} 
\caption{NMAE utilizing Algorithm \ref{alg:GenASD} with the trace inverse regularizer and with the nuclear norm regularizer, along with LMaFit}
\label{table:ML_MC}
\centering
\scalebox{0.85}{
\begin{tabular}{l|l|l|l|l|l|l|l|l|l}
{\color[HTML]{000000} }     & \multicolumn{3}{l|}{{\color[HTML]{000000} MovieLens100k}}                                              & \multicolumn{3}{l|}{{\color[HTML]{000000} MovieLens1m}}                                                & \multicolumn{3}{l|}{{\color[HTML]{000000} Jester}}                                                     \\ \hline
{\color[HTML]{000000} Fold} & {\color[HTML]{000000} TI}              & {\color[HTML]{000000} NN}     & {\color[HTML]{000000} LmaFit} & {\color[HTML]{000000} TI}              & {\color[HTML]{000000} NN}     & {\color[HTML]{000000} LmaFit} & {\color[HTML]{000000} TI}              & {\color[HTML]{000000} NN}     & {\color[HTML]{000000} LmaFit} \\ \hline
{\color[HTML]{000000} 1}    & {\color[HTML]{000000} \textbf{0.1724}} & {\color[HTML]{000000} 0.1812} & {\color[HTML]{000000} 0.1800}   & {\color[HTML]{000000} \textbf{0.1683}} & {\color[HTML]{000000} 0.1695} & {\color[HTML]{000000} 0.1820}  & {\color[HTML]{000000} \textbf{0.1570}}  & {\color[HTML]{000000} 0.1607} & {\color[HTML]{000000} 0.1600}   \\ \hline
{\color[HTML]{000000} 2}    & {\color[HTML]{000000} \textbf{0.1719}} & {\color[HTML]{000000} 0.1799} & {\color[HTML]{000000} 0.1775} & {\color[HTML]{000000} \textbf{0.1676}} & {\color[HTML]{000000} 0.1699} & {\color[HTML]{000000} 0.1811} & {\color[HTML]{000000} \textbf{0.1577}} & {\color[HTML]{000000} 0.1610}  & {\color[HTML]{000000} 0.1601} \\ \hline
{\color[HTML]{000000} 3}    & {\color[HTML]{000000} \textbf{0.1702}} & {\color[HTML]{000000} 0.1785} & {\color[HTML]{000000} 0.1781} & {\color[HTML]{000000} \textbf{0.1682}} & {\color[HTML]{000000} 0.1695} & {\color[HTML]{000000} 0.1825} & {\color[HTML]{000000} \textbf{0.1572}} & {\color[HTML]{000000} 0.1604} & {\color[HTML]{000000} 0.1596} \\ \hline
{\color[HTML]{000000} 4}    & {\color[HTML]{000000} \textbf{0.1715}} & {\color[HTML]{000000} 0.1789} & {\color[HTML]{000000} 0.1787} & {\color[HTML]{000000} \textbf{0.1685}} & {\color[HTML]{000000} 0.1703} & {\color[HTML]{000000} 0.1824} & {\color[HTML]{000000} \textbf{0.1572}} & {\color[HTML]{000000} 0.1603} & {\color[HTML]{000000} 0.1602} \\ \hline
{\color[HTML]{000000} 5}    & {\color[HTML]{000000} \textbf{0.1732}} & {\color[HTML]{000000} 0.1822} & {\color[HTML]{000000} 0.1788} & {\color[HTML]{000000} \textbf{0.1678}} & {\color[HTML]{000000} 0.1691} & {\color[HTML]{000000} 0.1815} & {\color[HTML]{000000} \textbf{0.1574}} & {\color[HTML]{000000} 0.1612} & {\color[HTML]{000000} 0.1601} \\ \hline
{\color[HTML]{000000} avg}  & {\color[HTML]{000000} \textbf{0.1719}} & {\color[HTML]{000000} 0.1802} & {\color[HTML]{000000} 0.1786} & {\color[HTML]{000000} \textbf{0.1681}} & {\color[HTML]{000000} 0.1697} & {\color[HTML]{000000} 0.1819} & {\color[HTML]{000000} \textbf{0.1573}} & {\color[HTML]{000000} 0.1607} & {\color[HTML]{000000} 0.1600}  
\end{tabular}}
\end{table}

We utilize Algorithm 4.2 and LMaFit on the MovieLens100k and MovieLens1m datasets \cite{movielens}, and the Jester dataset \cite{Jester}.  Both MovieLens datasets consist of ratings on various movies, rated from 1 to 5, and the Jester dataset consists of ratings on jokes, rated -10 to 10.  The MovieLens100k dataset has 1,000 users, 1,700 movies, and 100,000 measurements, the MovieLens1m dataset has 6,000 users, 4,000 movies, and 1 million measurements, and the Jester dataset has 24,983 users, 101 jokes, and 689,000 measurements.  Note that while the movie lens datasets are both very sparse (approximately 5\%), the Jester dataset has 27\% of all possible ratings.  
	
For each dataset, we separate the data into five partitions, and for each partition we use the remaining four partitions to find a low rank matrix, and the fifth partition to test our results.  In Table \ref{table:ML_MC}, we report the normalized mean absolute error (NMAE), defined as
$$\text{NMAE}=\frac{1}{n_\text{ratings}} \sum_i \frac{|y_i - \tilde{y}_i|}{y_\text{max}-y_\text{min}}$$
where $n_{\text{ratings}}$ is the total number of ratings used in the testing set, $y$ is the measurements from the dataset, $\tilde{y}$ are the predictions from the low rank matrix, and $y_\text{max}$ and $y_\text{min}$ are the maximum and minimum ratings for the dataset (5 and 1 for the MovieLens dataset, and -10 and 10 for the Jester dataset).  In each case, we use 10 as the upper bound on the rank. We found that the NMAE for LMaFit is minimized when constrained to a rank 1 matrix, which is what is reported.  
	
In every fold in each of the three datasets, Algorithm 4.2 utilizing the trace norm regularizer outperforms the nuclear norm regularizer and LMaFit.  To gain insight as to why the trace inverse regularizer outperforms the other methods, we examine the singular value distribution of the resulting low rank matrix.  The singular values for the matrices recovered from the MovieLens1M dataset withholding fold 5 is shown for each method in Figure \ref{fig:movie_lens_svd}.  Comparing the trace inverse to the nuclear norm, the first singular value of the matrix recovered with the trace inverse regularizer is larger, and the rest are smaller, which is expected because the trace inverse puts more weight on minimizing smaller singular value and less weight on minimizing larger singular values.  Because the ratings matrix is close to a rank one matrix, penalizing the largest singular value is disadvantageous because we expect it to be large.   Additionally, as opposed to the result from LMaFit, the remaining 9 singular values are nonzero.  This demonstrates the advantage of rank minimization methods over rank constrained methods:  while we may want to put more emphasis on the first singular value, the remaining singular values are still important.  In a rank constrained paradigm, there is no way to both keep singular values and also minimize them. 

\begin{figure}
    \centering
\includegraphics[width=0.8\columnwidth]{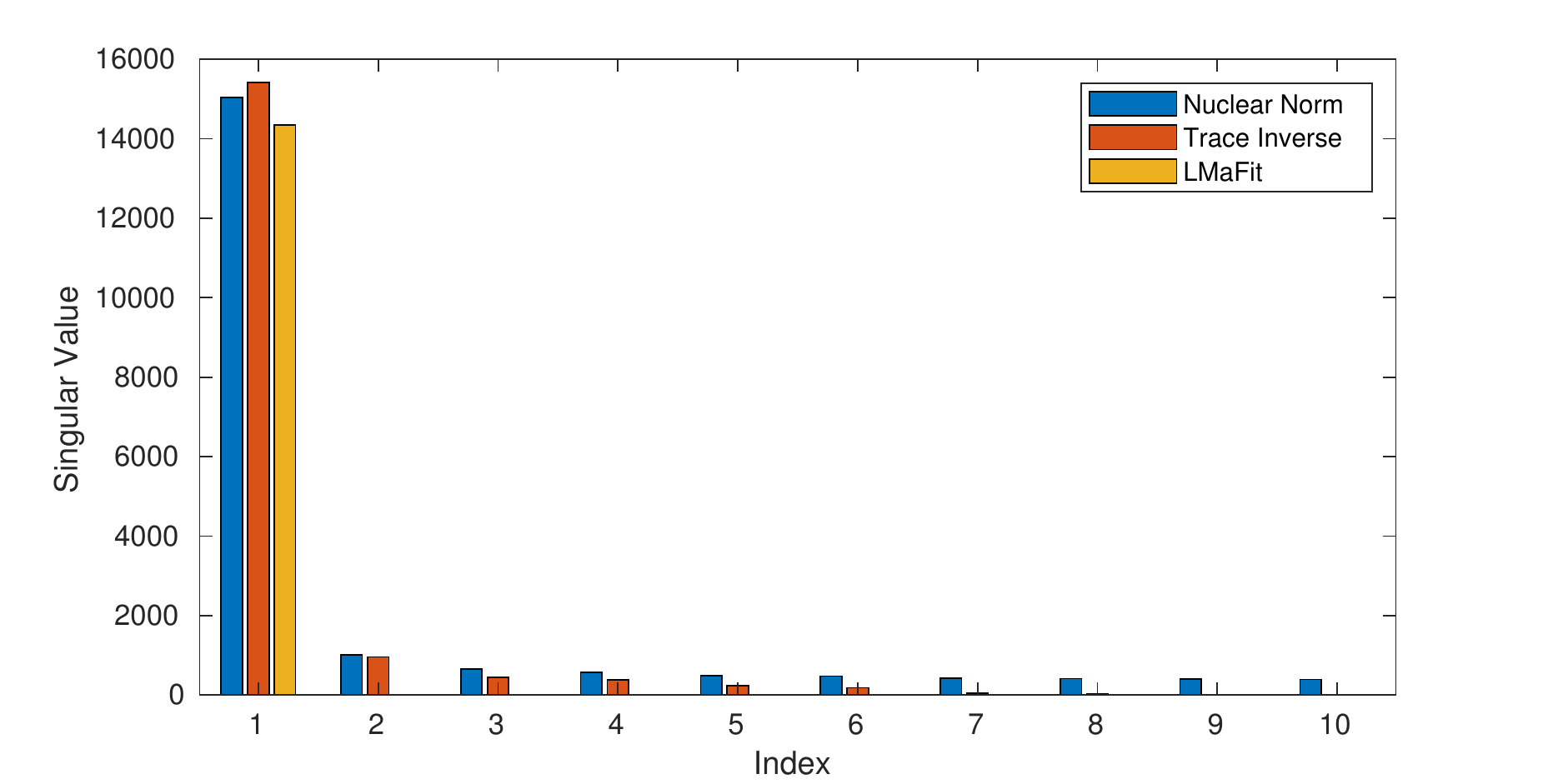}
    \caption{Singular value decomposition for the matrix recovered from the MovieLens1M dataset withholding fold 5.}
    \label{fig:movie_lens_svd}
\end{figure}

\section{Conclusions}
We have shown that the problem of minimizing the rank of a matrix using nonconvex regularizers can be posed as a bi-convex semidefinite optimization problem. By doing so, we were able to derive efficient algorithms using a low rank factorization and show convergence.

The methods are shown to be computationally superior to methods based off of the nuclear norm relaxation, and that the estimator bias is drastically reduced by using nonconvex regularizers.  We show that the quality of the result from our algorithm hardly changes when either of the parameters are changed by multiple orders of magnitude.  Additionally, we show that our method is faster than other existing methods based off of nonconvex regularizers.

\bibliographystyle{spmpsci}      

\bibliography{references}   

\end{document}